\newtheorem{thm}{Theorem}[section]
\newtheorem{prop}[thm]{Proposition}
\newtheorem{exmp}[thm]{Example}
\newtheorem{rmk}[thm]{Remark}
\newtheorem{thm-con}[thm]{Theorem-Conjecture}
\numberwithin{equation}{section}
\theoremstyle{definition}
\newtheorem{defn}[thm]{Definition}
\newcommand{\f}{\Bbb F}
\begin{document}

\title[Alexander $f$-quandles on finite fields]{The Cocycle structure of the Alexander $f$-quandles on finite fields}

\author[Indu Rasika Churchill]{Indu Rasika Churchill}
\address{Department of Mathematics and Statitstics,
University of South Florida, Tampa, FL 33620}
\email{udyanganiesi@mail.usf.edu}

\author[Mohamed Elhamdadi]{Mohamed Elhamdadi}
\address{Department of Mathematics and Statitstics,
University of South Florida, Tampa, FL 33620}
\email{emohamed@usf.edu}

\author[Neranga Fernando]{Neranga Fernando}
\address{Department of Mathematics,
Northeastern University, Boston, MA 02115}
\email{w.fernando@northeastern.edu}

\begin{abstract} 
We determine the second, third, and fourth cohomology groups of Alexander $f$-quandles  of the form $\mathbb{F}_q[T,S]/ (T-\omega, S-\beta)$, where $\f_q$ denotes the finite field of order $q$, $\omega \in \f_q\setminus \{0,1\}$, and $\beta \in \f_q$.
\end{abstract}

\keywords{Quandle, $f$-Quandle, Cohomology, Alexander quandle}

\subjclass[2010]{57M27}

\maketitle


\section{Introduction}
Quandles are in general non-associative structures whose axioms correspond to the algebraic distillation of the three Reidemeister moves in knot theory.  They were introduced independently in the 1980s by Joyce \cite{Joyce} and Matveev \cite{Matveev}.  Quandles  were used to construct representations of the braid groups.   
Thus giving constructions of invariants of knots and knotted
surfaces as can be seen in \cite{CEGS, CJKLS, CESY}. They have been also investigated in the topological context \cite{EM, R} and also for their own
right as other non-associative algebraic structures \cite{H1, H2,HSV, T}.  For more details and recent account on quandles see \cite{EN, N}.

Motivated by Hom-algebra structures \cite{MS}, $f$-racks,$f$-quandles and their cohomology theory were  introduced and investigated in \cite{CEGM-2016}. Explicit cocycles of this quandle cohomology may be used in the study of Knot Theory, thus in this paper,  we investigate the second, third, and fourth cohomology groups of Alexander $f$-quandles \cite{CEGM-2016}. Our work is  motivated by \cite{TM-2005}, \cite{TM-2003}, and \cite{TN-2013}.  Precisely we give basis for the cohomology group $H^n((X, *, f); \f_q)$ with $n=2,3$ and $4$.

   Through out this paper, let $p$ be a prime, $q = p^m$, and $ \mathbb{F}_q$  denote the finite field of order $q$. Let $M = \mathbb{Z}[\omega^{\pm},\beta] = \mathbb{F}_q$ , where  $\omega (\neq 1)$ and $\beta$ be non-zero elements of $\f_q$. Let $k$ be an algebraic closure of $\mathbb{F}_q$. 
For $n = 2, 3, 4$, we wish to calculate the Cohomology $H^n(\mathbb{F}_q[T,S]/(T-\omega, S-\beta),k)$ of the Alexander $f$-quandle $\mathbb{F}_q[T,S]/(T-\omega, S-\beta)$  with coefficients in $k$.

 At the end of each of sections 3, 4 and 5, we provide basis for $2$-cocycle, $3$-cocycle and $4$-cocycle in theorems 3.2, 4.10 and 5.9 respectively. The proofs of this theorems are similar to that of \cite{TM-2005}. These proofs will appear in  future work.

The paper is organized as follows. In Section 2, we present some preliminaries that will be used throughout the paper. In Sections ~\ref{2cocy}, ~\ref{3cocy}, and ~\ref{4cocy}, we survey $2$-cocycles, $3$-cocycles, and $4$-cocyles of Alexander $f$-quandles, respectively. We also give some examples in each section.


\section{Preliminaries}

In this section, we list some preliminaries that will be useful in latter sections. 

\begin{defn}(\cite[Definition 2.1]{CEGM-2016})
An $f$-quandle  is a set $X$ equipped with a binary operation $* :  X \times X \to X$ and a map $f : X \to X$ satisfying the following conditions:

For each $x\in X$, the identity
\begin{equation} \label{fCon-I}
x* x =f(x)
\end{equation}

holds. For any $x,y \in X$, there exists a unique $z \in X$ such that
\begin{equation} \label{fCon-II}
z* y = f(x).
\end{equation}
\begin{equation} \label{fCon-III}
(x *y) * f(z) =  (x * z) * (y * z)
\end{equation}
\end{defn}
We denote $f$-quandle by $(X, *, f)$.\\

\noindent Any $\mathbb{Z}[\omega^{\pm},\beta]$-module $M$ is an $f$-quandle with 
\begin{equation*} x*y = \omega \cdot  x +\beta \cdot  y   \end{equation*} 
for $x, y \in M $ with $\omega \beta = \beta \omega $, and we call it an \textit{Alexander $f$-quandle} (\cite[Example 2.1 item (4)]{CEGM-2016}).

\begin{rmk}
	When $f$ is the identity map and $\beta = 1 - \omega$ above, then  $(X, *)$ is a quandle and $(M,*)$ is an Alexander quandle as usual.
\end{rmk}



\begin{thm} (\cite[Theorem 5.1]{CEGM-2016}) \label{T2.1} Let $(X, *,f)$ be a $f$-quandle, $f$ be a quandle morphism  and $A$ be an abelian group.The following family of operators $ \delta^n : C^n(X) \to C^{n+1}(X)$ defines a cohomology complex $C^{*}(X, *,f, A).$
\begin{eqnarray*}
\lefteqn{
\delta^n \phi (x_1, \dots, x_{n+1}) } \nonumber \\ && =
(-1)^{n+1} \sum_{i=2}^{n+1} (-1)^{i}    \eta_{[x_1, \dots, \hat{x}_i, \dots, x_{n+1}],f^{\{i-2\}}[x_i, \dots, x_{n+1}]} \phi(x_1, \dots, \hat{x}_i, \dots, x_{n+1})
\nonumber \\
&&
- (-1)^{n+1} \sum_{i=2}^{n+1} (-1)^{i}   \phi  (x_1 \ast x_i, x_2 \ast  x_i, \dots, x_{i-1}\ast  x_i,f( x_{i+1}), \dots, f(x_{n+1}))
\nonumber\\
&&
+ (-1)^{n+1}  \tau_{[x_1, x_3, \dots, x_{n+1}],[x_2, \dots, x_{n+1}]} \phi (x_2, \dots, x_{n+1}),
\end{eqnarray*}
where $  [x_1, x_2, x_3, x_4, \dots, x_n] =  (( \dots (x_1 * x_2) * f(x_3)) * f^2(x_4))* \dots ) *  f^{n-2}(x_n)$. Note that  for $ i <n,$ we have \\

 $ [x_1, x_2, x_3, x_4,\dots, x_n] = [x_1,\dots, \hat{x}_i,\dots, x_n] * f^{i-2}[x_i,\dots, x_n]$

\end{thm}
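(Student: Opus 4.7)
The plan is to establish that $\delta^{n+1}\circ\delta^n=0$, which is the only nontrivial content of the theorem (that $C^{\ast}(X,\ast,f,A)$ is a graded abelian group and each $\delta^n$ is a homomorphism is immediate from the definitions). Before touching $\delta^{n+1}\delta^n$, I would first dispatch the auxiliary bracket identity stated in the theorem: for $i<n$,
\[
[x_1,\ldots,x_n] \;=\; [x_1,\ldots,\hat{x}_i,\ldots,x_n] \ast f^{i-2}[x_i,\ldots,x_n].
\]
This follows by induction on $n-i$ using the self-distributive axiom (\ref{fCon-III}) and the hypothesis that $f$ is a quandle morphism, so that $f^k$ distributes over $\ast$; this identity is exactly what forces the coefficient labels $\eta_{[\cdots],f^{\{i-2\}}[\cdots]}$ in $\delta^n$ to be consistent under re-indexing.

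Next, I would expand $(\delta^{n+1}\delta^n\phi)(x_1,\ldots,x_{n+2})$ by substituting the formula into itself. Since $\delta^n$ is a sum of three types of terms (the $\eta$-weighted face with the $i$-th argument removed, the $\ast$-shifted face where $x_2,\ldots,x_{i-1}$ are acted on by $x_i$ and the tail is pushed through $f$, and the $\tau$-weighted face dropping $x_1$), the composition produces nine families of triple sums, which I would label as a $3\times 3$ matrix of terms indexed by pairs $(i,j)$ with $2\le i,j\le n+2$. The linear-algebra goal is then to match each term against a partner with the opposite sign.

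The main engine of cancellation is the pairing of an index pair $(i,j)$ with $i<j$ in the order "delete $x_j$ then $x_i$" against the order "delete $x_i$ then $x_j$." The diagonal pairing (applying the first face-type outside and then the first face-type inside) cancels directly once the $\eta$-coefficients are shown to agree, which is where the auxiliary bracket identity above is needed. The pairing of the outer $\ast$-shift term with the inner $\ast$-shift term is the step that forces the use of (\ref{fCon-III}): the expressions $(x_a\ast x_i)\ast f(x_j)$ and $(x_a\ast x_j)\ast(x_i\ast x_j)$ must be identified, and the hypothesis that $f$ is a quandle morphism lets $f$-powers commute past the $\ast$'s inside the face. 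The cross-pairings involving the $\tau$-term and the axiom (\ref{fCon-I}) (used on collisions where the same variable would be removed twice) close the remaining off-diagonal entries.

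The main obstacle is purely combinatorial-bookkeeping: tracking the $(-1)^{n+1}(-1)^i(-1)^j$ signs through re-indexing, and verifying that each application of self-distributivity produces exactly the partner expression rather than something that merely looks similar. This is the same difficulty that arises in proving $\delta^2=0$ for ordinary quandle cohomology in the work of Carter--Jelsovsky--Kamada--Langford--Saito, with one added layer coming from the $f^{i-2}$-twists and a second added layer coming from verifying that the coefficient symbols $\eta$ and $\tau$ (read as $1$-cocycle data relative to the $f$-quandle action) satisfy the compatibility relations needed so that coefficient factors match across paired terms. Once these matchings are checked pair-by-pair, every term in $\delta^{n+1}\delta^n\phi$ is accounted for and cancels, completing the proof.
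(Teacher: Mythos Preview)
The paper does not prove this theorem: it is quoted verbatim as \cite[Theorem~5.1]{CEGM-2016} and used as a black box, with no argument supplied here. There is therefore no ``paper's own proof'' to compare your proposal against.

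That said, your sketch is the standard and correct route for results of this type. The bracket identity you isolate is exactly the compatibility needed to make the $\eta$-labels match under re-indexing, and the pairwise cancellation of the nine families of double-face terms via (\ref{fCon-III}) and the morphism property of $f$ is how the analogous $\delta^2=0$ is established both in ordinary quandle cohomology \cite{CJKLS} and in the $f$-twisted setting of \cite{CEGM-2016}. One point to be careful with, which you allude to but do not pin down, is that the vanishing genuinely requires $(\eta,\tau)$ to satisfy the $f$-quandle $1$-cocycle conditions coming from the dynamical cocycle/extension theory in \cite{CEGM-2016}; without those constraints the coefficient factors on paired terms need not agree and the cancellation fails. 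If you intend this as a self-contained proof rather than a pointer to \cite{CEGM-2016}, you should state those hypotheses on $\eta$ and $\tau$ explicitly.
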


As in the standard quandle cohomology theory, the degenerate subcomplex is given by $C^D_n = \{ (x_1, x_2, ....., x_n) \in X^n \, ; \, x_i = x_{i+1} \, \,  \text{for} \,  i \geq 2 \}$. A similar degenerate subcomplex appeared in  \cite{NP} under the name of  \textit{late degenerate quandles}.

%
%
%
%
%
%


Under the assumption that $\eta = id$ and $\tau = 0$, we can re-write the cohomology complex in Theorem~\ref{T2.1} as follows.

\begin{equation} \label{E3.3}
\begin{split}
&\delta^n \phi (x_1, \dots, x_{n+1}) \cr
&= (-1)^{n+1} \sum_{i=2}^{n+1} (-1)^{i}     \phi(x_1, \dots, \hat{x}_i, \dots, x_{n+1}) \cr
&- (-1)^{n+1} \sum_{i=2}^{n+1} (-1)^{i}   \phi  (x_1 \ast x_i, x_2 \ast  x_i, \dots, x_{i-1}\ast  x_i,f( x_{i+1}), \dots, f(x_{n+1})).
\end{split}
\end{equation} 

We will reformulate the $f$-quandle cohomology for convenient of calculations.

Let $ U_1=x_1-x_2, U_2=x_2-x_3, \ldots,  U_i = x_i-x_{i+1}, \ldots , U_{n}=x_{n}-x_{n+1}, U_{n+1} = x_{n+1}$ for $i=1,2, \cdots, n.$

Then \eqref{E3.3} becomes

\begin{equation} \label{E3.4}
\begin{split}
&\delta^n \phi (U_1, \dots, U_{n+1})  \cr
&= (-1)^{n+1} \sum_{i=1}^{n} (-1)^{i}     \phi(U_1, \dots, U_{i-1}, U_i+U_{i+1}, U_{i+2}, \dots, U_{n+1}) \cr
&- (-1)^{n+1} \sum_{i=1}^{n} (-1)^{i}   \phi  (\omega\,U_1, \omega\,U_2, \dots, \omega\,U_{i-1}, \omega\,U_i+(\omega+\beta)\,U_{i+1}, f(U_{i+2}), \dots, f(U_{n+1}))
\end{split}
\end{equation}

The following formula is a generalization of \cite[Eq. (3)]{TN-2013} when $\eta = id$ and $\tau = 0$ with \\
$C^n_d(X) := \{ \sum a_{i_1, \cdots,i_n} \cdot U_1^{i_1} \cdots U_n^{i_n} \in C^n(X) \mid \sum_{1\leq k \leq n}i_k=d\}$  and $ degree (f_a) = d_a.$

\begin{equation} \label{E2.1}
\begin{split}
&\delta_n(f)(U_1,\ldots,U_n,U_{n+1})= \displaystyle\sum_{0\leq a\leq p-1}\,\delta_{n-1}(f_a)(U_1,\ldots, U_n)\cdot U_{n+1}^a \cr
& + (-1)^{n-1}\, \displaystyle\sum_{0\leq a\leq p-1}\,f_a(U_1,\ldots, U_{n-1})(U_n+U_{n+1})^a \cr
&- (-1)^{n-1}\, \displaystyle\sum_{0\leq a\leq p-1}\,f_a(U_1,\ldots, U_{n-1}) \, \omega^{d_a}\, (\omega +\beta)^{d - d_a-a}\, (\omega \,U_n+(\omega +\beta)\,U_{n+1})^a.
\end{split}
\end{equation}


\section{The 2-cocycles}\label{2cocy}
In this section, we investigate the $2$-cocycles.  Precisely we provide basis of the second cohomology $H_Q^2((X, *, f); \f_q)$.

\begin{prop}
If $\omega^{p^t+p^s} = 1$ and $(\omega+\beta)^{p^t+p^s} = 1$, where $s$ and $t$ are non-negative integers, then $U_1^{p^t}U_2^{p^s}$ is a $2$-cocycle.
\end{prop}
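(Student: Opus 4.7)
My plan is to compute $\delta^2 \phi$ directly for $\phi(U_1,U_2) = U_1^{p^t} U_2^{p^s}$ using the reformulated coboundary \eqref{E3.4} with $n=2$, and verify it vanishes precisely when the two hypothesized equations hold. Since the underlying field has characteristic $p$, the Frobenius-style identity $(a+b)^{p^k} = a^{p^k} + b^{p^k}$ makes every expansion linear, so the whole computation collapses to a handful of monomial coefficients.

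Concretely, instantiating \eqref{E3.4} at $n=2$ gives
\begin{equation*}
\delta^2\phi(U_1,U_2,U_3) = \phi(U_1+U_2,U_3) - \phi(U_1, U_2+U_3) - \phi\bigl(\omega U_1 + (\omega+\beta)U_2,\, f(U_3)\bigr) + \phi\bigl(\omega U_1,\, \omega U_2 + (\omega+\beta)U_3\bigr),
\end{equation*}
where, as one checks from the $f$-quandle axiom $(x*y)*f(z)=(x*z)*(y*z)$ applied to $x*y=\omega x + \beta y$, one has $f(U)=(\omega+\beta)U$ on the Alexander $f$-quandle. I would substitute $\phi(U,V)=U^{p^t}V^{p^s}$ into each of the four terms and, using $(U_i+U_j)^{p^k}=U_i^{p^k}+U_j^{p^k}$, expand each as a sum of monomials in $U_1^{p^t}U_2^{p^s}$, $U_1^{p^t}U_3^{p^s}$, and $U_2^{p^t}U_3^{p^s}$, carrying along the scalars $\omega^{p^t}$, $(\omega+\beta)^{p^s}$, etc.

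Then I would collect coefficients of the three monomials. The $U_1^{p^t}U_3^{p^s}$ terms cancel identically (the scalars $\omega^{p^t}(\omega+\beta)^{p^s}$ from the third and fourth terms balance, as do the two copies of $U_1^{p^t}U_3^{p^s}$ coming from the first and second). The remaining contributions are $(1 - (\omega+\beta)^{p^t+p^s})\,U_2^{p^t}U_3^{p^s}$ and $(\omega^{p^t+p^s} - 1)\,U_1^{p^t}U_2^{p^s}$. Both vanish exactly under the hypotheses $\omega^{p^t+p^s}=1$ and $(\omega+\beta)^{p^t+p^s}=1$, so $\delta^2\phi=0$ and $\phi$ is a $2$-cocycle.

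The computation is entirely mechanical, so there is no real obstacle; the only thing that requires care is the identification $f(U_3)=(\omega+\beta)U_3$ in the change of variables $U_i = x_i - x_{i+1}$ (so that $f$ acts by multiplication by $\omega+\beta$ on each $U_i$), and the bookkeeping of scalar factors $\omega^{p^t}$, $(\omega+\beta)^{p^t}$, $\omega^{p^s}$, $(\omega+\beta)^{p^s}$ as they combine into the products $\omega^{p^t+p^s}$ and $(\omega+\beta)^{p^t+p^s}$ that appear in the hypothesis.
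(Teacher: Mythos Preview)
Your proposal is correct and follows essentially the same approach as the paper: a direct expansion of the coboundary using the Frobenius identity $(a+b)^{p^k}=a^{p^k}+b^{p^k}$, followed by collecting monomial coefficients. The only cosmetic difference is that the paper organizes the computation via the recursive formula \eqref{E2.1} (first computing $\delta(U_1^{p^t})$ and then feeding it into $\delta(U_1^{p^t}U_2^{p^s})$), whereas you substitute directly into \eqref{E3.4}; the resulting four terms and the final coefficients $(1-(\omega+\beta)^{p^t+p^s})$ and $(\omega^{p^t+p^s}-1)$ are identical.
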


\begin{proof}
By \eqref{E3.4}, we have 

$$\delta(U_1^{p^t}) = (U_1+U_2)^{p^t} - (\omega\,U_1 + (\omega +\beta)\,U_2)^{p^t}.$$


Then it follows from \eqref{E3.4} and \eqref{E2.1} that 

\begin{equation}\label{N1}
\begin{split}
\delta(U_1^{p^t} U_2^{p^s})&= \delta(U_1^{p^t}) U_3^{p^s}-U_1^{p^t}(U_2+U_3)^{p^s} \cr
&+U_1^{p^t} \omega^{d_a} (\omega+\beta)^{d-d_a - a} (\omega \,U_2 + (\omega+\beta) \,U_3)^{p^s}.
\end{split}
\end{equation}

Also, note that  $  d_a = p^t, a = p^s$ and $ d = p^t+ p^s $. Then we have from \eqref{N1}

\begin{equation}\label{New1}
\begin{split}
\delta(U_1^{p^t} U_2^{p^s})&= (U_1+U_2)^{p^t} U_3^{p^s}- (\omega +\beta)^{p^s} (\omega\,U_1 + (\omega +\beta)\,U_2)^{p^t} U_3^{p^s}-U_1^{p^t}(U_2+U_3)^{p^s} \cr
&+U_1^{p^t} \omega^{p^t} (\omega+\beta)^0  (\omega \,U_2 + (\omega +\beta) \,U_3)^{p^s}\cr
&=(1-\omega^{p^t}(\omega+\beta)^{p^s}) U_1^{p^t}U_3^{p^s}\,+\,(1-(\omega +\beta)^{p^s+p^t}) U_2^{p^t}U_3^{p^s}\cr
&-(1-\omega^{p^t}(\omega+\beta)^{p^s}) U_1^{p^t}U_3^{p^s}\,-\,(1-\omega^{p^s+p^t}) U_2^{p^t}U_3^{p^s}.
\end{split}
 \end{equation}
 
Since $\omega^{p^t+p^s} = 1$ and $(\omega+\beta)^{p^t+p^s} = 1$, the right hand side of \eqref{New1} is $0$. This completes the proof.
\end{proof}
\begin{thm}\label{T1}
Fix $\omega, \beta \in \f_q$ with $\omega \neq 0, 1$. Let $X$ be the corresponding Alexander $f$-quandle on $\f_q$. Then the set 

$$\{U_1^{p^v}U_2^{p^u} \mid \omega^{p^v+p^u} = 1, (\omega+\beta)^{p^v+p^u} = 1; \, \,  0 \leq v < u < m\} $$

provides a basis of the second cohomology $H_Q^2((X, *, f); \f_q)$.

\end{thm}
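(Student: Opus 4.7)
The plan is to imitate the strategy of \cite{TM-2005} for ordinary Alexander quandles, adapted to the $f$-quandle cochain complex governed by (2.1). The key observation is that $\delta$ preserves the total degree in the variables $U_i$, so the complex splits as $C^* = \bigoplus_d C^*_d$ and it suffices to compute the degree-$d$ piece $H^2_{Q,d}$ separately for each $d\geq 0$.

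For each degree $d$, I would take a general homogeneous 2-cochain $\phi = \sum_{i=0}^d c_i\, U_1^i U_2^{d-i}$ and expand
\[
\delta\phi(U_1,U_2,U_3)=\sum_i c_i\bigl[(U_1+U_2)^i-(\omega+\beta)^{d-i}(\omega U_1+(\omega+\beta)U_2)^i\bigr]U_3^{d-i}-\sum_i c_i U_1^i\bigl[(U_2+U_3)^{d-i}-\omega^i(\omega U_2+(\omega+\beta)U_3)^{d-i}\bigr]
\]
via the binomial theorem, then collect the coefficient of each monomial $U_1^aU_2^bU_3^{d-a-b}$. This yields a linear system in the $c_i$ whose solution space is $Z^2_d$. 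The analogous, simpler computation with a 1-cochain $\psi=cU_1^d$ gives $\delta^1\psi = c\sum_k\binom{d}{k}\bigl[\omega^k(\omega+\beta)^{d-k}-1\bigr]U_1^kU_2^{d-k}$, which describes the coboundary subspace $B^2_d$. Hence $H^2_{Q,d} = Z^2_d/B^2_d$ becomes a finite-dimensional linear-algebra question in the scalars $\omega^k(\omega+\beta)^{d-k}-1$ together with the binomial coefficients $\binom{d}{k}$, $\binom{i}{k}$, $\binom{d-i}{\ell}$ reduced modulo $p$.

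The heart of the argument is a combinatorial reduction via Lucas's theorem: $\binom{d}{k}\not\equiv 0\pmod p$ exactly when each base-$p$ digit of $k$ is dominated by the corresponding digit of $d$. Culling the vanishing terms, one finds that $H^2_{Q,d}$ can be nontrivial only when $d$ has exactly two nonzero base-$p$ digits, both equal to $1$; equivalently, $d=p^v+p^u$ for some $v<u$. In this case, a class survives exactly when $\omega^d=1$ and $(\omega+\beta)^d=1$, consistent with Proposition 3.1. The apparent symmetry between $U_1^{p^v}U_2^{p^u}$ and $U_1^{p^u}U_2^{p^v}$ is resolved by computing $\delta^1(U_1^d)$: Lucas's theorem forces the only possibly nonzero monomials to be $U_2^d$, $U_1^{p^v}U_2^{p^u}$, $U_1^{p^u}U_2^{p^v}$, and $U_1^d$, and when $\omega^d=(\omega+\beta)^d=1$ the outermost two vanish, producing a linear relation identifying the two middle monomials in cohomology. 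Keeping only the representatives with $v<u$ then yields the claimed basis.

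The main obstacle is carrying out the Lucas-theorem reduction rigorously: for every $d$ not of the form $p^v+p^u$, one must show that any solution of the cocycle system lies in the image of $\delta^1$, which requires a careful case analysis of how the scalars $1-\omega^k(\omega+\beta)^{d-k}$ vanish in relation to the multiplicative orders of $\omega$ and $\omega+\beta$ in $k^\times$. This is precisely the delicate step that \cite{TM-2005} executes for the classical Alexander quandle and that must be transported to the $f$-setting here. A secondary technicality is verifying that the degenerate subcomplex does not absorb any of the listed basis elements; since each $U_1^{p^v}U_2^{p^u}$ remains nonzero under any substitution that breaks degeneracy, this should follow from direct inspection.
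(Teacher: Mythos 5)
Be aware first that the paper contains no proof of this theorem to compare against: the introduction states that the proofs of Theorems 3.2, 4.10 and 5.9 ``are similar to that of \cite{TM-2005}'' and ``will appear in future work,'' and Section~\ref{2cocy} proves only Proposition 3.1, i.e.\ that the listed monomials are cocycles. Your plan --- split the cochain complex by total degree $d$ in the $U_i$, realize $Z^2_d$ as the solution space of the linear system obtained by expanding \eqref{E3.4} on a general homogeneous $2$-cochain, compute $B^2_d=\langle \delta(U_1^d)\rangle$, and use Lucas's theorem to control the surviving binomial coefficients --- is exactly the Mochizuki-style strategy the authors intend, and your expansions agree with the paper's computation in \eqref{New1}. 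But as written it is a roadmap, not a proof. The decisive claim, that $H^2_{Q,d}$ vanishes unless $d=p^v+p^u$ and is one-dimensional when moreover $\omega^{d}=(\omega+\beta)^{d}=1$, is introduced with ``one finds'' and then re-listed at the end as ``the main obstacle''; nothing in the proposal actually solves the cocycle system for general $d$. In particular, Lucas's theorem alone does not dispose of degrees with a base-$p$ digit $\geq 2$ or with more than two nonzero digits: one must rule out non-monomial cocycles (for instance $\chi$-type combinations in degrees divisible by $p$, which do occur as genuine classes in $H^3$ in Section~\ref{3cocy} and must be shown to be absent from, or coboundaries in, degree $2$), and that case analysis is precisely the content of \cite{TM-2005} that you have not transported. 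Until it is carried out, what you have established is Proposition 3.1, not the basis statement.

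There is also a concrete soft spot in the one sub-argument you do execute. For $d=p^v+p^u$ with $v<u$ you identify $U_1^{p^u}U_2^{p^v}$ with a multiple of $U_1^{p^v}U_2^{p^u}$ via
\[
\delta(U_1^{d})=\bigl(1-\omega^{p^v}(\omega+\beta)^{p^u}\bigr)U_1^{p^v}U_2^{p^u}+\bigl(1-\omega^{p^u}(\omega+\beta)^{p^v}\bigr)U_1^{p^u}U_2^{p^v},
\]
the $k=0$ and $k=d$ terms dying by hypothesis. This pins down the quotient only if at least one of the two remaining coefficients is nonzero. Under $\omega^{d}=(\omega+\beta)^{d}=1$ both coefficients vanish simultaneously exactly when $\omega^{p^u}=(\omega+\beta)^{p^u}$, which by injectivity of the Frobenius forces $\beta=0$; in that case $\delta(U_1^d)=0$, the two monomials represent independent classes, and the stated basis is too small. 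The theorem as stated does not exclude $\beta=0$, so you must invoke the paper's standing assumption that $\beta$ is nonzero explicitly at this step. You should likewise return to the monomials $U_1^d$, $U_2^d$ and $U_1^aU_2^{d-a}$ with $a$ not a $p$-power, which your linear system sees but your writeup never eliminates.
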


\begin{exmp}
Let $p$ be an odd prime and $v, u$ be non-negative integers. Let $\omega = -1$ and $ \beta = 2$. Then we have $\omega^{p^v+p^u} = 1$ and $(\omega + \beta)^{p^v+p^u} = 1$. Hence, the set defined in Theorem~\ref{T1} provides a basis for $2$-cocycles. 
\end{exmp}

\begin{exmp}
Let $f(x)=x^2+x+1\in \f_2[x]$ and consider $\f_{4}=\f_2[x]/(f)$. Let $\omega$ be a primitive element of $\f_{4}$. Then the order of $\omega$ is $3$. Let $\beta= \omega^{2}$. Note that $\omega^{2}=\omega +1$ and $\omega^{2}$ is also a primitive element of $\f_{4}$ since it is a conjugate of $\omega$ with respect to $\f_2$. We have
$$\omega^{2^0+2^1}=1\,\,and \,\, (\omega+\beta)^{2^0+2^1}=1,$$

Hence $\{U_1^{2^0}U_2^{2^1}\} $ provides a basis of the second cohomology $H_Q^2((X, *, f); \f_{4})$.

\end{exmp}


\section{The 3-cocycles}\label{3cocy}

In this section we give basis for the cohomology group $H_Q^3((X, *, f); \f_q)$.

For positive integers $a$ and $b$, let 

$$\mu_a(x,y)=(x+y)^a-x^a-y^a$$

and define 

$$\psi(a,b):=\,\,(\mu_a(U_1, U_2)-\mu_a(\omega\,U_1, (\omega+\beta)\,U_2))\cdot U_3^b.$$

Then we have the following proposition. 

\begin{prop}
If $\omega^{a+p^s} = 1$ and $ (\omega+\beta)^{a+p^s} = 1$, then $\Psi (a, p^s)$ is a $3$-cocycle.
\end{prop}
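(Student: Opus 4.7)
Begin by writing $\Psi(a, p^s) = F(U_1, U_2) \cdot U_3^{p^s}$ where $F(U_1, U_2) = \mu_a(U_1, U_2) - \mu_a(\omega U_1, (\omega + \beta) U_2)$ is homogeneous of degree $a$ in $(U_1, U_2)$. Since $\Psi(a, p^s)$ has exactly the product form exploited in the proof of Proposition~3.1, the plan is to apply the Leibniz-type decomposition \eqref{E2.1} with $n = 3$. Only the coefficient $F_{p^s}=F$ is nonzero in the decomposition, the degree bookkeeping gives $d_{p^s}=a$ and $d-d_{p^s}-p^s=0$, so
\[
\delta^3 \Psi(a,p^s)=\delta^2 F(U_1,U_2,U_3)\cdot U_4^{p^s}-F(U_1,U_2)(U_3+U_4)^{p^s}+\omega^a F(U_1,U_2)\bigl(\omega U_3+(\omega+\beta)U_4\bigr)^{p^s}.
\]

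Next I would apply the Frobenius identity valid in characteristic $p$: $(X+Y)^{p^s}=X^{p^s}+Y^{p^s}$. This splits the two bare expressions cleanly as $U_3^{p^s}+U_4^{p^s}$ and $\omega^{p^s}U_3^{p^s}+(\omega+\beta)^{p^s}U_4^{p^s}$. Grouping by monomials in $U_3$ and $U_4$, the standalone $U_3^{p^s}$ coefficient equals $(\omega^{a+p^s}-1)F(U_1,U_2)$, which vanishes by the first hypothesis. Thus $\delta^3\Psi(a,p^s)$ reduces to
\[
U_4^{p^s}\bigl[\delta^2 F(U_1,U_2,U_3)+\bigl(\omega^a(\omega+\beta)^{p^s}-1\bigr)F(U_1,U_2)\bigr].
\]

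The remaining task is to show that the bracketed expression is identically zero. To do this I would expand $\delta^2 F(U_1,U_2,U_3)$ directly from \eqref{E3.4}: it is an alternating sum of four evaluations of $F$, at $(U_1+U_2,U_3)$, $(U_1,U_2+U_3)$, $(\omega U_1+(\omega+\beta)U_2,(\omega+\beta)U_3)$ and $(\omega U_1,\omega U_2+(\omega+\beta)U_3)$. Substituting $F=\mu_a(\cdot,\cdot)-\mu_a(\omega\,\cdot,(\omega+\beta)\,\cdot)$ and expanding each $\mu_a$ by the binomial theorem, the ``mixed'' contributions of the form $(\omega^i U_1+(\omega+\beta)^j U_2+(\omega+\beta)^k U_3)^a$ pair up across the four evaluations and cancel. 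What is left is a scalar multiple of $F(U_1,U_2)$, and the second hypothesis $(\omega+\beta)^{a+p^s}=1$ is used to rewrite $(\omega+\beta)^{p^s}=(\omega+\beta)^{-a}$ so that the residual scalar matches $1-\omega^a(\omega+\beta)^{p^s}$ exactly, producing the required cancellation with the second term of the bracket.

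The main obstacle is precisely this last bookkeeping step. The expansion of $\delta^2 F$ produces eight nested pieces (four evaluations of $F$, each a difference of two $\mu_a$'s), and verifying that all $U_3$-dependent contributions cancel while the $F(U_1,U_2)$-multiple falls out with the right scalar demands careful tracking of which power of $\omega$ and which power of $\omega+\beta$ decorates each monomial. Both hypotheses enter complementarily: the first annihilates the standalone $U_3^{p^s}$ coefficient and the second closes the cancellation in the $U_4^{p^s}$ coefficient.
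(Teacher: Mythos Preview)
Your overall strategy---apply the Leibniz-type splitting \eqref{E2.1}, use Frobenius on $(U_3+U_4)^{p^s}$, and then analyze the bracket---is exactly the skeleton the paper follows. Where you diverge is in your prediction of what the bracketed expression $\delta^2 F+(\omega^a(\omega+\beta)^{p^s}-1)F(U_1,U_2)$ actually looks like, and this is a genuine gap.

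You assert that after expanding $\delta^2 F$ the $U_3$-dependent pieces all cancel and ``what is left is a scalar multiple of $F(U_1,U_2)$,'' with the second hypothesis then used to tune that scalar. This is not what happens. The coboundary $\delta^2 F$ (with the degree weights coming from the ambient $3$-cochain) yields a linear combination of \emph{both} $F(U_1,U_2)$ and $F(U_2,U_3)$; concretely, the paper obtains
\[
(1-\omega^a(\omega+\beta)^{p^s})\,F(U_1,U_2)\;-\;(1-(\omega+\beta)^{a+p^s})\,F(U_2,U_3).
\]
The $F(U_1,U_2)$ contribution here cancels \emph{automatically} against the $(\omega^a(\omega+\beta)^{p^s}-1)F(U_1,U_2)$ term in your bracket---no hypothesis needed. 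What survives is the $F(U_2,U_3)$ piece, whose coefficient is $1-(\omega+\beta)^{a+p^s}$, and \emph{that} is what the second hypothesis annihilates. So the second hypothesis does not ``rewrite $(\omega+\beta)^{p^s}=(\omega+\beta)^{-a}$ to match a scalar''; it directly kills the $F(U_2,U_3)$ term you overlooked.

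The paper avoids your eight-term brute-force expansion by the identity
\[
F(U_1,U_2)=\delta(U_1^a)-(1-\omega^a)U_1^a-(1-(\omega+\beta)^a)U_2^a,
\]
so that $\delta^2=0$ disposes of the first summand and only the two monomial corrections need their coboundaries computed. This immediately gives the clean two-term formula above and makes the role of each hypothesis transparent.
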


\begin{proof}
Define $$h(U_1, U_2)=\mu_a(U_1, U_2)-\mu_a(\omega\,U_1, (\omega+\beta)\,U_2).$$

Note that 
$$\psi(a,b):=\,\,h(U_1, U_2)\cdot U_3^b.$$

Then by \eqref{E3.4}, we have 

$$\delta(U_1^a) = (U_1+U_2)^a - (\omega\,U_1 + (\omega +\beta) U_2)^a ,$$

which implies 

$$h(U_1, U_2)= \delta(U_1^a) -  (1-\omega^a)\cdot U_1^a - (1-(\omega+\beta)^a)\cdot U_2^a.$$

Also, from \eqref{E3.4}, we have 

\begin{equation}\label{E3.5}
\begin{split}
& \delta(h(U_1, U_2)) \cr 
&= - h(U_1+U_2, U_3) + h(\omega U_1 + (\omega+\beta)U_2, (\omega+\beta)U_3) + h(U_1, U_2+U_3) \cr 
&- h(\omega U_1, \omega U_2 +(\omega+\beta)U_3) \cr
&= (1-\omega^a)\,h(U_1, U_2)-(1-(\omega+\beta)^a)\,h(U_2, U_3)\cr
&= \big( h(U_1, U_2) - h(U_2, U_3)\big) - \big(\omega^a\,h(U_1, U_2) - (\omega+\beta)^a\,h(U_2, U_3)\big).
\end{split}
\end{equation}

Since 

$$\psi(a,b)=\,\,h(U_1, U_2)\cdot U_3^b, $$

from \eqref{E2.1} and  \eqref{E3.5} we have 

\begin{equation} \label{E3.6}
\begin{split}
&\delta(\Psi(a, b)) \cr 
&= \delta(h(U_1, U_2))\cdot U_4^{b} - h(U_1, U_2)\, \delta(U_3^b)\cr 
&= \Big[\big( h(U_1, U_2) - h(U_2, U_3)\big) - (\omega+\beta)^b \big(\omega^a\,h(U_1, U_2) - (\omega+\beta)^a\,h(U_2, U_3)\big)\Big]\,U_4^b \cr 
&- h(U_1, U_2)\, \big((U_3+U_4)^{b} -\omega^a\,(\omega\,U_3 + (\omega+\beta)\,U_4)^{b}\big).
\end{split}
\end{equation}

Let $b=p^s$. Then, from \eqref{E3.6} we have 

\begin{equation} \label{E3.7}
\begin{split}
&\delta(\Psi(a, p^s)) \cr 
&=(1-\omega^a\,(\omega +\beta)^{p^s})\,h(U_1,U_2) \,U_4^{p^s} - (1-(\omega+\beta)^{a+p^s})\, h(U_2, T_3)\,U_4^{p^s} \cr 
&-(1-\omega^{a+p^s})\,h(U_1, U_2)\,U_3^{p^s}-(1-\omega^a\,(\omega +\beta)^{p^s})\,h(U_1, U_2)\,U_4^{p^s}.
\end{split}
\end{equation}

Since $\omega^{a+p^s} = 1$ and $ (\omega+\beta)^{a+p^s} = 1$, the right hand side of \eqref{E3.7} is $0$. This completes the proof. 

\end{proof}

\begin{rmk}
Moreover, $\Psi$, defined above, is a coboundary; see \cite{JMandemaker}.
\end{rmk}

Let $\chi(x, y)= \displaystyle\sum_{i=1}^{p-1}\,(-1)^{i-1}\cdot i^{-1}\cdot x^{p-i}\cdot y^i \equiv \displaystyle\frac{1}{p} ((x+y)^p - x^p - y^p) \pmod{p}$. 

Define 

$$E_0(a \cdot p, b)= \Big(\chi(U_1, U_2)^a - (\omega+\beta)^b\,\chi(\omega\,U_1, (\omega+\beta)\,U_2)^a\Big)\cdot U_3^b.$$

Also, define 

$$h(U_1,U_2):=\chi(U_1,U_2)^a-(\omega+\beta)^b\,\chi(\omega\,U_1, (\omega+\beta)\,U_2)^a.$$

Then we have

$$E_0(a \cdot p, b)= h(U_1, U_2)\cdot U_3^b.$$

Hence we have the following proposition. 

\begin{prop}
If $\omega^{p^s+p^h}=1$ and $(\omega+\beta)^{p^s+p^h}=1$ with $s > 0$, then $E_0(p^s, p^h)$ is a $3-$cocycle.
\end{prop}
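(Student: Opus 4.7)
Since $s>0$, write $a=p^{s-1}$, so that $a\cdot p=p^s$ and $E_0(p^s,p^h)=h(U_1,U_2)\cdot U_3^{p^h}$ with the $2$-cochain
\[
h(U_1,U_2) \;=\; \chi(U_1,U_2)^{p^{s-1}} \;-\;(\omega+\beta)^{p^h}\,\chi(\omega U_1,(\omega+\beta)U_2)^{p^{s-1}},
\]
which is homogeneous of degree $p^s$. I would follow the template of the proof of Proposition~4.4: apply the product rule \eqref{E2.1} to decompose $\delta(E_0(p^s,p^h))$, compute $\delta_2(h)$ via a Hochschild-type identity for $\chi$, and cancel the surviving terms using both hypotheses.

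\textbf{Step 1 (Leibniz expansion).} Exactly as in the computation leading to (\ref{E3.6}), with $\deg h=p^s$ and $b=p^h$,
\[
\delta(E_0(p^s,p^h)) \;=\; \delta_2(h)\cdot U_4^{p^h} \;-\; h(U_1,U_2)\bigl[(U_3+U_4)^{p^h}-\omega^{p^s}(\omega U_3+(\omega+\beta)U_4)^{p^h}\bigr].
\]
Because $p^h$ is a power of $p$, Frobenius collapses the bracket to $(1-\omega^{p^s+p^h})\,U_3^{p^h}+\bigl(1-\omega^{p^s}(\omega+\beta)^{p^h}\bigr)\,U_4^{p^h}$, and the first hypothesis $\omega^{p^s+p^h}=1$ kills the $U_3^{p^h}$ piece.

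\textbf{Step 2 (the identity for $\delta_2(h)$).} The heart of the argument is to establish
\[
\delta_2(h)(U_1,U_2,U_3) \;=\; \bigl(1-\omega^{p^s}(\omega+\beta)^{p^h}\bigr)\,h(U_1,U_2)\;-\;\bigl(1-(\omega+\beta)^{p^s+p^h}\bigr)\,h(U_2,U_3),
\]
the exact analogue of the identity used in Proposition~4.4 for the simpler $h_{\mu_a}$. To obtain it I would exploit the Frobenius identity $\chi(x,y)^{p^{s-1}}=\chi(x^{p^{s-1}},y^{p^{s-1}})$, valid because the coefficients of $\chi$ lie in $\mathbb{F}_p$. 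Setting $V_i=U_i^{p^{s-1}}$, $\Omega=\omega^{p^{s-1}}$, $\Gamma=(\omega+\beta)^{p^{s-1}}$ reduces $\delta_2(h)$ to manipulations of $\chi$ in the $V_i$. The required tools are the Hochschild-type identity $\chi(V_1,V_2+V_3)-\chi(V_1+V_2,V_3)=\chi(V_1,V_2)-\chi(V_2,V_3)$ (proved by lifting to $\mathbb{Z}$ and using $p\chi=\mu_p$) together with its twisted counterpart for $\chi(\Omega V_1+\Gamma V_2,\Gamma V_3)-\chi(\Omega V_1,\Omega V_2+\Gamma V_3)$, obtained by repeatedly applying $(A+B)^p=A^p+B^p+\mu_p(A,B)$ to the two distinct splittings of $\Omega V_1+\Omega V_2+\Gamma V_3$ and $\Omega V_1+\Gamma V_2+\Gamma V_3$. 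Collecting coefficients of $h(U_1,U_2)$ and $h(U_2,U_3)$ in the resulting four pieces yields the identity, with the factor $(\omega+\beta)^{p^h}$ in $h$ and the Frobenius power $(\omega+\beta)^{p^s}$ combining into $(\omega+\beta)^{p^s+p^h}$ in the coefficient of $h(U_2,U_3)$. Now the second hypothesis $(\omega+\beta)^{p^s+p^h}=1$ kills the $h(U_2,U_3)$ term, giving $\delta_2(h)=\bigl(1-\omega^{p^s}(\omega+\beta)^{p^h}\bigr)h(U_1,U_2)$, and Step~1 shows this exactly cancels the surviving contribution, so $\delta(E_0(p^s,p^h))=0$.

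\textbf{Main obstacle.} Step~2 is where the work lies. In Proposition~4.4 the cochain $h_{\mu_a}$ is visibly a small modification of $\delta(U_1^a)$, so $\delta^2=0$ delivers the identity almost for free; here $\chi^{p^{s-1}}$ is not the coboundary of any natural $1$-cochain over $\mathbb{F}_q$, since the polynomial $\mu_{p^s}$ is identically zero in characteristic~$p$. Hence the identity must be obtained by direct polynomial algebra, and the cleanest route is to lift $\omega,\beta$ to characteristic zero (any integral lift works), exploit $p\chi=\mu_p$, perform the binomial calculations, and divide by $p$ at the end; careful handling of the binomial terms divisible by $p$ but not by $p^2$ is exactly what causes the coefficient $(\omega+\beta)^{p^s+p^h}$ to appear.
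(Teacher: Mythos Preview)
Your proposal is correct and follows essentially the same route as the paper: a Leibniz-type splitting $\delta(E_0)=\delta(h)\cdot U_4^{p^h}-h\cdot\delta(U_3^{p^h})$, the identity $\delta_2(h)=(1-\omega^{p^s}(\omega+\beta)^{p^h})h(U_1,U_2)-(1-(\omega+\beta)^{p^s+p^h})h(U_2,U_3)$, and cancellation via the two hypotheses. The paper simply asserts this identity for $\delta(h)$ (its equation for $\delta(E_0(a\cdot p,b))$) without derivation, whereas you supply the Hochschild-type argument for $\chi$ and the characteristic-zero lift that actually justifies it; so your write-up is if anything more complete than the paper's own proof.
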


\begin{proof}

\begin{equation}\label{E3.1}
\begin{split}
\delta(E_0(a\cdot p, b))&= \delta(h(U_1, U_2))\cdot U_4^b -\,h(U_1, U_2)\,\delta(U_3^b) \cr
&= (1-\omega^{ap}\,(\omega +\beta)^b)\,h(U_1, U_2)\,U_4^b\,-\,(1-(\omega +\beta)^{ap+b})\,h(U_2, U_3)\,U_4^b \cr
&-\,h(U_1, U_2)\,\big((U_3+U_4)^b-\omega^{ap}\,(\omega\,U_3+(\omega+\beta)\,U_4)^b\big). \cr
\end{split}
\end{equation}

Let $a=p^{s-1}$ and $b=p^h$. Then from equation \eqref{E3.1} we have 

\begin{equation}\label{E3.2}
\begin{split}
\delta(E_0(p^s, p^h))&= (1-\omega^{p^{s}}\,(\omega +\beta)^{p^h})\,h(U_1,U_2)\cdot U_4^{p^h} \cr 
&-\,(1-(\omega+\beta)^{p^{s}+p^{h}})\,h(U_2,U_3)\cdot U_4^{p^h} \cr        
&-\,(1-\omega^{p^s+p^h})\,U_3^{p^h}\,h(U_1,U_2)\cr
&-\,(1-\omega^{p^{s}}\,(\omega +\beta)^{p^h})\,h(U_1,U_2)\cdot U_4^{p^h}.
\end{split}
\end{equation}

Since $\omega^{p^s+p^h}=1$ and $(\omega+\beta)^{p^s+p^h}=1$, the right hand side of \eqref{E3.2} is $0$. This completes the proof. 

\end{proof}

Again, let $\chi(x, y)= \displaystyle\sum_{i=1}^{p-1}\,(-1)^{i-1}\cdot i^{-1}\cdot x^{p-i}\cdot y^i \equiv \displaystyle\frac{1}{p} ((x+y)^p - x^p - y^p) \pmod{p}$. 

Define 

$$E_1(a, b\cdot p)= U_1^a \cdot \Big(\chi(U_2,U_3)^b-\omega^{a}\,\chi(\omega\,U_2,\, (\omega+\beta)\,U_3)^b\Big).$$

Also, define 

$$h(U_2,U_3):=\chi(U_2,U_3)^b-\omega^{a}\,\chi(\omega\,U_2,\, (\omega+\beta)\,U_3)^b.$$

Then we have the following propositon. 

\begin{prop}
If $\omega^{p^s+p^t}=1$ and $(\omega+\beta)^{p^s+p^t}=1$ with $s > 0$, then $E_1(p^t, p^s)$ is a $3-$cocycle.
\end{prop}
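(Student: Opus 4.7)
My plan is to mirror the proof of the preceding proposition for $E_0(p^s, p^h)$: the cochain $E_1$ has the same structural form as $E_0$, but with the pure monomial in the first slot rather than the last. I would set $a = p^t$ and $b = p^{s-1}$, so that $E_1(p^t, p^s) = U_1^{p^t} \cdot h(U_2, U_3)$ with $h(U_2, U_3) = \chi(U_2, U_3)^{b} - \omega^{p^t} \chi(\omega U_2, (\omega+\beta) U_3)^{b}$. Applying formula \eqref{E3.4} with $n = 3$ produces six summands in $\delta^3 E_1(U_1, U_2, U_3, U_4)$: three from the ``merge'' substitutions $U_i \mapsto U_i + U_{i+1}$ and three from the ``$f$-twisted'' substitutions $U_i \mapsto \omega U_i + (\omega+\beta) U_{i+1}$ combined with $f(U_j) = (\omega+\beta) U_j$.

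To reduce these six terms I would use three tools. First, the Frobenius identity $(U_1 + U_2)^{p^t} = U_1^{p^t} + U_2^{p^t}$ splits the leading monomial in characteristic $p$. Second, the homogeneity of $\chi$, namely $\chi(\lambda x, \lambda y) = \lambda^p \chi(x, y)$, lets me simplify each $f$-twisted evaluation via $h((\omega+\beta) U_j, (\omega+\beta) U_{j+1}) = (\omega+\beta)^{p^s}\, h(U_j, U_{j+1})$. Third, the built-in $\omega^{p^t}$ twist inside $h$ itself causes many of the remaining $\chi^{b}$ pieces to cancel pairwise.

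Collecting terms, I expect the coboundary to take the form
\begin{equation*}
\delta(E_1(p^t, p^s)) = (1 - \omega^{p^t + p^s})\, X_1 \,+\, (1 - (\omega+\beta)^{p^t + p^s})\, X_2 \,+\, Y - Y,
\end{equation*}
in complete analogy with equation \eqref{E3.2}. Under the hypotheses $\omega^{p^t + p^s} = 1$ and $(\omega+\beta)^{p^t + p^s} = 1$, the first two contributions vanish and the last pair cancels automatically, proving that $E_1(p^t, p^s)$ is a $3$-cocycle.

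The main obstacle is the third reduction: I must pair the twisted and untwisted $\chi^b$ evaluations across the six coboundary terms so that exactly the right $(1 - \omega^{\cdots})$ and $(1 - (\omega+\beta)^{\cdots})$ factors emerge, and any leftovers pair off with equal and opposite signs. The hypothesis $s > 0$ is essential here, since it guarantees that $b = p^{s-1}$ is a positive integer and hence that the Frobenius manipulations on $\chi^b$ go through exactly as in the preceding proposition.
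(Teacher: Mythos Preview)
Your plan is correct and essentially matches the paper's proof: the paper likewise sets $a=p^t$, $b=p^{s-1}$, writes $E_1(p^t,p^s)=U_1^{p^t}h(U_2,U_3)$, and expands $\delta(E_1)$ via the product-type formula~\eqref{E2.1} (which is just an organized version of your direct application of~\eqref{E3.4}) into an expression whose pieces vanish under $\omega^{p^t+p^s}=(\omega+\beta)^{p^t+p^s}=1$ after unwinding the $\chi$'s inside $h$. The only cosmetic difference is that the paper packages the six coboundary terms into $\delta(U_1^a)\,h(U_3,U_4)-U_1^a\,\delta(h(U_2,U_3))$ first, then specializes, whereas you specialize immediately; both routes rely on exactly the Frobenius and $\chi$-homogeneity reductions you identify.
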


\begin{proof}

Note that 

$$E_1(a, b\cdot p)= U_1^a \cdot h(U_2,U_3).$$ 

We have

\begin{equation}\label{E4.8}
\begin{split}
&\delta(E_1(a, b\cdot p))\cr
&=\delta(U_1^a \cdot h(U_2,U_3)) \cr
&=\delta(U_1^a)\,h(U_3, U_4)-U_1^a\,\delta(h(U_2, U_3))\cr
&=\big((U_1+U_2)^a-(\omega +\beta)^{p\cdot b}\,(\omega\,U_1+(\omega +\beta)\,U_2)^a\big)\,h(U_3, U_4)\cr
&-U_1^a\,\big(h(U_2+U_3, U_4)-\omega^a\,h(\omega\,U_2+(\omega +\beta)\,U_3,\,(\omega +\beta)\,U_4)\big)\cr
&+U_1^a\,\big(h(U_2, U_3+U_4)-\omega^a\,h(\omega\,U_2, \omega\,U_3+\,(\omega +\beta)\,U_4)\big).
\end{split}
\end{equation}

Let $a=p^t$ and $b=p^{s-1}$. Then from \eqref{E4.8} we have

\begin{equation}\label{E4.9}
\begin{split}
&\delta(E_1(p^t, p^s))\cr
&=\,U_1^{p^t}\cdot \Big[(1-\omega^{p^t}(\omega +\beta)^{p^s})\, h(U_3, U_4)- h(U_2+U_3,\,U_4) \cr 
&+ \omega^{p^t}\, h(\omega \,U_2+(\omega+\beta)\,U_3,\,(\omega +\beta)\,U_4)+ h(U_2, U_3+U_4)\cr 
&-\omega^{p^t}\, h(\omega \,U_2, \omega\,U_3+(\omega+\beta)\,U_4)\Big] +\,(1-(\omega +\beta)^{p^t+p^s})\,U_2^{p^t}\,h(U_3, U_4).
\end{split}
\end{equation}

Since $h(U_i,U_{i+1})=\chi(U_i,U_{i+1})^{p^{s-1}}-\omega^{p^t}\,\chi(\omega\,U_i,\, (\omega+\beta)\,U_{i+1})^{p^{s-1}}$,  $\omega^{p^s+p^h}=1$,  and $(\omega+\beta)^{p^s+p^t}=1$, straightforward computation yields that the right hand side of \eqref{E4.9} is $0$. This completes the proof. 

\end{proof}

Let $p$ be a prime, and $v$, $u$, and $t$ be non-negative integers.
Define $F(p^v, p^u,p^ t) = U_1^{p^v} U_2^{p^u} U_3^{p^t} \in C^3$ where $p^v, p^u, p^ t < q$. 

\begin{prop}
\begin{enumerate}
\item  If $\omega^{p^v+ p^u+p^t}=1$ and $(\omega+\beta)^{p^v+ p^u+p^t}=1$, then $F(p^v, p^u, p^t)$ is a $3$-cocycle. 
\item If $\omega^{p^v+ p^u}=1$ and $(\omega+\beta)^{p^v+ p^u}=1$, then $F(p^v, p^u, 0)$ is a $3$-cocycle.
\end{enumerate}
\end{prop}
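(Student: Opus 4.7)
The plan is to plug $F(p^v, p^u, p^t) = U_1^{p^v} U_2^{p^u} U_3^{p^t}$ directly into the reformulated coboundary \eqref{E3.4} for $n=3$ and exploit two observations that make the computation almost mechanical. First, the $f$-quandle axiom $x \ast x = f(x)$ combined with $x \ast y = \omega x + \beta y$ forces $f(x) = (\omega+\beta)x$, so each $f(U_j)$ appearing in the ``twist'' substitution of \eqref{E3.4} is simply $(\omega+\beta)U_j$, contributing a factor $(\omega+\beta)^{p^k}$ when raised to the $p^k$-th power. Second, the characteristic-$p$ Frobenius identity $(X+Y)^{p^k} = X^{p^k} + Y^{p^k}$ linearizes every $p^k$-th power that arises after substitution. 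After these two steps, $\delta F$ becomes a sum of at most twelve monomials, each necessarily of the shape $U_I^{p^v} U_J^{p^u} U_K^{p^t}$ with $I < J < K$ in $\{1,2,3,4\}$ (substitutions preserve relative position), so the entire coboundary lies in the span of only four monomials and vanishing reduces to four scalar identities.

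For part (1), direct bookkeeping shows that the coefficients of the two ``interior'' monomials $U_1^{p^v} U_3^{p^u} U_4^{p^t}$ and $U_1^{p^v} U_2^{p^u} U_4^{p^t}$ cancel identically: in the expansion $\delta F = -(A_1-B_1) + (A_2-B_2) - (A_3-B_3)$, the contributions from the adjacent pairs $(A_1, B_1)$ and $(A_2, B_2)$ telescope for the first monomial, and the pairs $(A_2, B_2)$ and $(A_3, B_3)$ telescope for the second. The coefficients of the two ``extremal'' monomials $U_2^{p^v} U_3^{p^u} U_4^{p^t}$ and $U_1^{p^v} U_2^{p^u} U_3^{p^t}$, arising solely from the $i=1$ and $i=3$ terms respectively, then collapse to $(\omega+\beta)^{p^v + p^u + p^t} - 1$ and $\omega^{p^v + p^u + p^t} - 1$, and the two hypotheses of part (1) kill both.

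Part (2) is the limiting case with the $U_3$-exponent set to $0$, so that $F(p^v, p^u, 0) = U_1^{p^v} U_2^{p^u}$ is independent of $U_3$. Replaying the same bookkeeping (the substitution $U_3 \mapsto U_3 + U_4$ now acts trivially on $F$ in the $A_3$ term, and the $U_4$-exponent collapses to $1$ everywhere), only three monomial types survive, and the nontrivial coefficient conditions reduce to $(\omega+\beta)^{p^v + p^u} = 1$ and $\omega^{p^v + p^u} = 1$, matching the stated hypotheses. The main obstacle throughout is purely the bookkeeping of the twelve Frobenius-expanded subterms and verifying that the telescoping cancellations occur as claimed; no idea beyond the techniques already illustrated in the earlier propositions of this section is required.
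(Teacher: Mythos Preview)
Your proposal is correct and follows essentially the same approach as the paper: both compute $\delta(U_1^{p^v}U_2^{p^u}U_3^{p^t})$ directly from \eqref{E3.4}, use the Frobenius identity to linearize the $p^k$-th powers, and then observe that the ``interior'' monomials $U_1^{p^v}U_3^{p^u}U_4^{p^t}$ and $U_1^{p^v}U_2^{p^u}U_4^{p^t}$ telescope identically while the ``extremal'' coefficients $1-(\omega+\beta)^{p^v+p^u+p^t}$ and $1-\omega^{p^v+p^u+p^t}$ vanish by hypothesis. Your treatment of part (2) as the limiting case with the $U_3$-exponent set to $0$ is likewise exactly what the paper does.
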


\begin{proof}

We first prove (1). 

\begin{equation}\label{NN1}
\begin{split}
&\delta(F(p^v, p^u, p^t))\cr
& = \delta(U_1^{p^v} U_2^{p^u} U_3^{p^t}) \cr
& =\, \big((U_1+U_2)^{p^v} - (\omega+\beta)^{p^u+p^t}(\omega U_1+ (\omega+\beta)U_2)^{p^v}\big) \cdot U_3^{p^u} \cdot U_4^{p^t}  \cr
& -\, U_1^{p^v} \cdot \big((U_2+U_3)^{p^u} -  \omega^{p^v} (\omega+\beta)^{p^t} (\omega U_2+ (\omega+\beta)U_3)^{p^u}\big) \cdot U_4^{p^t}\cr
&+\,  U_1^{p^v} \cdot U_2^{p^u} \big((U_3+U_4)^{p^t} -  \omega^{p^v+p^u}  (\omega U_3+ (\omega+\beta)\,U_4)^{p^t}\big) \cr
&=\, (1-\omega^{p^v}(\omega+\beta)^{p^u+p^t}) U_1^{p^v} U_3^{p^u} U_4^{p^t} + (1-(\omega+\beta)^{p^v+p^u+p^t}) U_2^{p^v} U_3^{p^u} U_4^{p^t} \cr
&-\, ( 1 - \omega^{p^v+p^u}(\omega+\beta)^{p^t})U_1^{p^v} U_2^{p^u} U_4^{p^t} - (1 - \omega^{p^v}(\omega+\beta)^{p^t+p^u})U_1^{p^v} U_3^{p^u} U_4^{p^t} \cr
&+\,  (1 - \omega^{p^v+p^u+p^t})U_1^{p^v} U_2^{p^u} U_3^{p^t} +  (1 - \omega^{p^v+p^u}(\omega+\beta)^{p^t})U_1^{p^v} U_2^{p^u} U_4^{p^t}\cr
&= \, 0.
\end{split}
\end{equation}

\noindent Since $\omega^{p^v+ p^u+p^t}=1$ and $(\omega+\beta)^{p^v+ p^u+p^t}=1$, the right hand side of \eqref{NN1} is $0$. 

\noindent In (2), by taking $p^t$ as $0$ in \eqref{NN1}, and with $\omega^{p^v+ p^u}=1$ and $(\omega+\beta)^{p^v+ p^u}=1$, it can be shown in a similar manner that $$\delta(F(p^v, p^u, 0)) =  0.$$
\end{proof}

\noindent As in \cite{JMandemaker, TM-2005}, let $Q$ be the set of all tuples $(p^v, p^u, p^t, p^s)$ where $p$ is a prime, such that $v < t, u < s, u \leq t$ and $ \omega^{p^v+p^t} = \omega^{p^u+p^s} = (\omega+\beta)^{p^v+p^t} = (\omega+\beta)^{p^u+p^s} = 1$, and one of the following conditions hold. \\
Case I.\,\,\,\,\,\,\,\,$\omega^{p^v+p^u} = 1, (\omega+\beta)^{p^v+p^u} = 1.$\\
Case II.\,\,\,\,\,\,$\omega^{p^v+p^u} \neq 1,(\omega+\beta)^{p^v+p^u} \neq 1 $ and $ t > s.$\\
Case III.\,\,\,\,$\omega^{p^v+p^u} \neq 1, (\omega+\beta)^{p^v+p^u} \neq 1, t = s$ and $p \neq 2.$\\
Case IV.\,\,\,\,$\omega^{p^v+p^u} \neq 1, (\omega+\beta)^{p^v+p^u} \neq 1, u \leq v < t < s$ and $\omega^{p^v} = \omega^{p^u}$, $(\omega+\beta)^{p^v} = (\omega+\beta)^{p^u}$ when $p \neq 2.$\\
Case V.\,\,\,\,\,\,$\omega^{p^v+p^u} \neq 1,(\omega+\beta)^{p^v+p^u} \neq 1,  u < v < t \leq s$ and $\omega^{p^v} = \omega^{p^u}$, $(\omega+\beta)^{p^v} = (\omega+\beta)^{p^u}$ when $p = 2.$

\noindent Moreover, if $p=2$, we need $ u < t$ as well.

\noindent For each $(p^v, p^u, p^t, p^s) \in Q$ , we denote a cocycle by $\Gamma$.\\ 

\noindent Then we have the following proposition discussing case I.\,\,$\omega^{p^v+p^u} = 1, (\omega+\beta)^{p^v+p^u} = 1.$

\begin{prop} \label{CaseI}
$\Gamma(p^v, p^u, p^t, p^s) = F(p^v, p^u+p^t, p^s)$ is a $3$-cocycle.
\end{prop}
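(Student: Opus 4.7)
The plan is to verify $\delta F(p^v,p^u+p^t,p^s)=0$ by direct expansion using the reformulated differential \eqref{E3.4} applied to the cochain $\phi(U_1,U_2,U_3)=U_1^{p^v}\,U_2^{p^u+p^t}\,U_3^{p^s}$. Recall that for an Alexander $f$-quandle, $f(U)=(\omega+\beta)\,U$. Setting $a=p^v$, $b=p^u$, $c=p^t$, $d=p^s$ and $\sigma=\omega$, $\rho=\omega+\beta$, the six terms of $\delta\phi(U_1,\dots,U_4)$ can each be expanded using the Frobenius identity $(X+Y)^{p^r}=X^{p^r}+Y^{p^r}$. The outer exponents $a$ and $d$ split cleanly into two monomials, but the hybrid middle exponent produces
\begin{equation*}
(U_2+U_3)^{b+c}=U_2^{b+c}+U_2^c U_3^b+U_2^b U_3^c+U_3^{b+c},
\end{equation*}
and analogously for $(\sigma U_2+\rho U_3)^{b+c}$. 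After expanding, $\delta\phi$ becomes a linear combination of exactly six distinct monomial types in $U_1,U_2,U_3,U_4$.

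Before collecting coefficients I would first derive the complete list of multiplicative identities among $\sigma$ and $\rho$ implied by the hypotheses. The $Q$-relations $\sigma^{a+c}=\sigma^{b+d}=\rho^{a+c}=\rho^{b+d}=1$ combined with Case I's $\sigma^{a+b}=\rho^{a+b}=1$ yield, by multiplication and division, the further identities $\sigma^{c+d}=\rho^{c+d}=1$ and $\sigma^{a+b+c+d}=\rho^{a+b+c+d}=1$. For instance, $\sigma^{a+b}=\sigma^{a+c}=1$ forces $\sigma^b=\sigma^c$, and then $\sigma^{b+d}=1$ gives $\sigma^{c+d}=1$.

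With these relations in hand, matching coefficients is routine. The two ``symmetric'' monomials $U_2^a U_3^{b+c} U_4^d$ and $U_1^a U_2^{b+c} U_3^d$ arise only from a single untwisted term (T1 or T3) and its twisted partner (T4 or T6), producing coefficients $-1+\rho^{a+b+c+d}$ and $-1+\sigma^{a+b+c+d}$ respectively, both zero. The two ``middle'' monomials $U_1^a U_2^{b+c} U_4^d$ and $U_1^a U_3^{b+c} U_4^d$ pick up four contributions that pair off without needing Case I. The only monomials that genuinely require Case I are the cross terms $U_1^a U_2^b U_3^c U_4^d$ and $U_1^a U_2^c U_3^b U_4^d$, which come solely from T2 and T5 and have coefficients $1-\sigma^{a+b}\rho^{c+d}$ and $1-\sigma^{a+c}\rho^{b+d}$; both vanish, the first by Case I together with the derived $\rho^{c+d}=1$, and the second directly from the $Q$-relations.

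The main obstacle is the cross-monomial bookkeeping: the non-prime-power exponent $p^u+p^t$ produces two extra off-diagonal monomials that do not appear in the earlier proof for $F(p^v,p^u,p^t)$, and one must confirm that Case I's pair of conditions is precisely what kills them. Once the preliminary derivation of auxiliary identities is done, every other monomial collapses mechanically.
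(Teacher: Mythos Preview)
Your proposal is correct and follows essentially the same direct-expansion approach as the paper: both compute $\delta(U_1^{p^v}U_2^{p^u+p^t}U_3^{p^s})$ term by term using \eqref{E3.4}, expand via Frobenius and the factorization $(x+y)^{p^u+p^t}=(x^{p^u}+y^{p^u})(x^{p^t}+y^{p^t})$, and then match monomial coefficients against the $Q$-relations together with the Case~I identities $\omega^{p^v+p^u}=(\omega+\beta)^{p^v+p^u}=1$ and their derived consequences $\omega^{p^t+p^s}=(\omega+\beta)^{p^t+p^s}=1$. Your organization---first deriving the auxiliary identities, then grouping the six monomial types---is somewhat tidier than the paper's raw listing of eight terms, but the argument is the same.
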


\begin{proof} 
\begin{equation}
\begin{split}
&\delta(F(p^v, p^u+p^t, p^s))\cr
&=\, \delta(U_1^{p^v}U_2^{p^u+p^t}U_3^{p^s})\cr
&=\,  \delta(U_1^{p^v})U_3^{p^u+p^t}U_4^{p^s} -  U_1^{p^v}\delta(U_3^{p^u+p^t})U_4^{p^s}  +  U_1^{p^v}U_3^{p^u+p^t} \delta(U_4^{p^s}) \cr
&=\, ((U_1+U_2)^{p^v} - (\omega+\beta)^{p^u+p^t+p^s} (\omega U_1+ (\omega+\beta)U_2)^{p^v} U_3^{p^u+p^t}U_4^{p^s}\cr
&-\, U_1^{p^v} \cdot ( (U_2+U_3)^{p^u+p^t} - \omega^{p^v} (\omega+\beta)^{p^s}(\omega+\beta)^{p^v} (\omega U_2+(\omega+\beta)U_3)^{p^u+p^t}))U_4^{p^s}\cr
&+\, U_1^{p^v} \cdot U_2^{p^u+p^t} \cdot ( (U_3+U_4)^{p^s} - \omega^{p^v+p^u+p^t} (\omega+\beta)^{p^v+p^u+p^t} (\omega U_3+(\omega+\beta)U_4)^{p^s}) 
\end{split}
\end{equation}
Note that $(x+y)^{p^u+p^t} = (x^{p^u}+y^{p^u})(x^{p^t}+y^{p^t})$, this reduced to
\begin{equation}
\begin{split}
&\delta(U_1^{p^v}U_2^{p^u+p^t}U_3^{p^s})\cr
&=\, (1- \omega^{p^v} (\omega+\beta)^{p^u+p^t+p^s})U_1^{p^v}U_3^{p^u+p^t}U_4^{p^s} +  (1- (\omega+\beta)^{p^u+p^t+p^s+p^v}) U_2^{p^v}U_3^{p^u+p^t}U_4^{p^s}\cr
&-\, (1-\omega^{p^v+p^u+p^t}(\omega+\beta)^{p^s} )U_1^{p^v}U_2^{p^u+p^t}U_4^{p^s} - (1- \omega^{p^v}(\omega+\beta)^{p^s+p^u+p^t} ) U_1^{p^v}U_3^{p^u+p^t}U_4^{p^s}\cr
&-\, (1- \omega^{p^v+p^u}(\omega+\beta)^{p^s+p^t} )U_1^{p^v}U_2^{p^u}U_3^{p^t}U_4^{p^s} -  (1- \omega^{p^v+p^t} (\omega+\beta)^{p^s+p^u})U_1^{p^v}U_2^{p^t}U_3^{p^u}U_4^{p^s}\cr
&+\, (1- \omega^{p^v+p^u+p^t+p^s} )U_1^{p^v}U_2^{p^u+p^t}U_3^{p^s} + (1- \omega^{p^v+p^u+p^t}(\omega+\beta)^{p^s} )U_1^{p^v}U_2^{p^u+p^t}U_4^{p^s}\cr
&=\, 0.
\end{split}
\end{equation}
\end{proof}

\noindent Then we have the following proposition discussing case II, $\omega^{p^v+p^u} \neq 1,(\omega+\beta)^{p^v+p^u} \neq 1 $ and $ t > s.$
\begin{prop} 
$ \Gamma(p^v, p^u, p^t, p^s) = F(p^v, p^u+p^t, p^s) - F(p^u, p^v+p^s, p^t) - (\omega^{p^u}(\omega+\beta)^{p^s}-1)^{-1} (1 - \omega^{p^u+p^v}(\omega+\beta)^{p^t+p^s}) F(p^v, p^u, p^t+p^s) + F(p^v+p^u, p^s, p^t)$  is a $3$-cocycle.
\end{prop}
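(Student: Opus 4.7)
The plan is to compute $\delta(\Gamma(p^v,p^u,p^t,p^s))$ by linearity, applying to each of the four $F$-summands the coboundary formula \eqref{E3.4} (equivalently \eqref{E2.1}) exactly as in the proof of the preceding proposition for $F(p^v,p^u,p^t)$. For a general monomial $F(a,b,c)=U_1^{a}U_2^{b}U_3^{c}$ the expression $\delta(F(a,b,c))$ splits into three Leibniz-type pieces: $\delta(U_1^{a})\,U_3^{b}U_4^{c}$, a middle term arising from shifting $U_2$ past $U_3$ under the $\ast$-action, and $U_1^{a}U_2^{b}\,\delta(U_3^{c})$. Each of these is then expanded using the Frobenius identity $(x+y)^{p^{k}}=x^{p^{k}}+y^{p^{k}}$ and the distributive action of $\omega$ and $\omega+\beta$, producing the same six-term pattern seen in equation \eqref{NN1}.

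Whenever one exponent is a sum of two distinct prime powers, for example $b=p^{u}+p^{t}$ inside $F(p^{v},p^{u}+p^{t},p^{s})$, the further identity $(x+y)^{p^{u}+p^{t}}=(x^{p^{u}}+y^{p^{u}})(x^{p^{t}}+y^{p^{t}})$ splits the central binomial into four monomials. Hence the four summands in $\Gamma$ expand into two classes of contributions in $U_1,\dots,U_4$: \emph{four-variable} monomials carrying a single prime-power exponent on each $U_i$, and \emph{three-variable} monomials in which two prime powers have merged onto one $U_i$. The standing hypotheses $\omega^{p^{v}+p^{t}}=\omega^{p^{u}+p^{s}}=(\omega+\beta)^{p^{v}+p^{t}}=(\omega+\beta)^{p^{u}+p^{s}}=1$ annihilate most coefficients of the form $1-\omega^{\alpha}(\omega+\beta)^{\gamma}$, because the pairs $(\alpha,\gamma)$ produced by the expansion line up to match either $p^{v}+p^{t}$ or $p^{u}+p^{s}$.

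After those cancellations, a single residual four-variable monomial (up to the symmetry among the three terms $F(p^{v},p^{u}+p^{t},p^{s})$, $F(p^{u},p^{v}+p^{s},p^{t})$, $F(p^{v}+p^{u},p^{s},p^{t})$) survives, with coefficient a nonzero scalar multiple of $1-\omega^{p^{u}+p^{v}}(\omega+\beta)^{p^{t}+p^{s}}$. On the other side, $\delta(F(p^{v},p^{u},p^{t}+p^{s}))$ furnishes the same monomial with coefficient a nonzero multiple of $\omega^{p^{u}}(\omega+\beta)^{p^{s}}-1$; this factor is invertible because the Case~II hypothesis $\omega^{p^{v}+p^{u}}\neq 1$ and $(\omega+\beta)^{p^{v}+p^{u}}\neq 1$, combined with $\omega^{p^{u}+p^{s}}=(\omega+\beta)^{p^{u}+p^{s}}=1$, rules out $\omega^{p^{u}}(\omega+\beta)^{p^{s}}=1$. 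The scalar $-(\omega^{p^{u}}(\omega+\beta)^{p^{s}}-1)^{-1}(1-\omega^{p^{u}+p^{v}}(\omega+\beta)^{p^{t}+p^{s}})$ is thus exactly what is forced in front of $F(p^{v},p^{u},p^{t}+p^{s})$ to kill the residual.

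The main obstacle is the combinatorial bookkeeping. Each $F$-summand produces on the order of twenty-four monomial contributions once all the Frobenius splittings of $p^{u}+p^{t}$, $p^{v}+p^{s}$, and $p^{v}+p^{u}$ have been carried out, and one must tabulate the coefficient of every monomial $U_1^{\alpha_1}U_2^{\alpha_2}U_3^{\alpha_3}U_4^{\alpha_4}$ appearing across the four summands. Once the table is arranged, every entry either pairs off via the four exponent relations $\omega^{p^{v}+p^{t}}=\omega^{p^{u}+p^{s}}=(\omega+\beta)^{p^{v}+p^{t}}=(\omega+\beta)^{p^{u}+p^{s}}=1$ or is eliminated by the chosen scalar in front of $F(p^{v},p^{u},p^{t}+p^{s})$; the strict inequality $t>s$ ensures that the prime powers in the expansion remain distinct so that monomials do not accidentally coincide. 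From this the desired identity $\delta(\Gamma)=0$ follows.
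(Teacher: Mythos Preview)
Your plan is exactly the paper's approach: apply $\delta$ to each $F$-summand via \eqref{E3.4}, expand the mixed exponents $p^u+p^t$, $p^v+p^s$, $p^v+p^u$, $p^t+p^s$ using $(x+y)^{p^a+p^b}=(x^{p^a}+y^{p^a})(x^{p^b}+y^{p^b})$, and then collect coefficients and cancel using the Case~II hypotheses. The paper's proof is in fact just the final display of the surviving terms after this bookkeeping.

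Two small corrections to your sketch. First, after the four relations $\omega^{p^v+p^t}=\omega^{p^u+p^s}=(\omega+\beta)^{p^v+p^t}=(\omega+\beta)^{p^u+p^s}=1$ are applied, \emph{two} residual four-variable monomials remain rather than one, namely $U_1^{p^v}U_2^{p^u}U_3^{p^t}U_4^{p^s}$ and $U_1^{p^u}U_2^{p^v}U_3^{p^s}U_4^{p^t}$, each with coefficient $\pm(1-\omega^{p^v+p^u}(\omega+\beta)^{p^t+p^s})$; both are killed by the $F(p^v,p^u,p^t+p^s)$ contribution once one uses $\omega^{p^v+p^t}=1$ and $(\omega+\beta)^{p^v+p^t}=1$ to reduce its coefficients to $\pm(1-\omega^{p^u}(\omega+\beta)^{p^s})$. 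Second, your justification for the invertibility of $\omega^{p^u}(\omega+\beta)^{p^s}-1$ does not follow from the Case~II inequalities as you wrote; the actual reason is that $(\omega+\beta)^{p^u+p^s}=1$ gives $\omega^{p^u}(\omega+\beta)^{p^s}=\bigl(\omega/(\omega+\beta)\bigr)^{p^u}$, which equals $1$ only when $\beta=0$, contrary to the standing assumption.
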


\begin{proof} 
\begin{equation}
\begin{split}
&\delta(F(p^v, p^u+p^t, p^s)) - \delta(F(p^u, p^v+p^s, p^t)) \cr
&-\, (\omega^{p^u}(\omega+\beta)^{p^s}-1)^{-1} (1 - \omega^{p^u+p^v}(\omega+\beta)^{p^t+p^s}) \delta(F(p^v, p^u, p^t+p^s) + \delta(F(p^v+p^u, p^s, p^t)))\cr
&=\, -( 1 - \omega^{p^v+p^u}(\omega+\beta)^{p^s+p^t}) U_1^{p^v}U_2^{p^u}U_3^{p^t}U_4^{p^s} + ( 1 - \omega^{p^v+p^u}(\omega+\beta)^{p^s+p^t}) U_1^{p^u}U_2^{p^v}U_3^{p^s}U_4^{p^t}\cr
&-\, (\omega^{p^u}(\omega+\beta)^{p^s}-1)^{-1} (1 - \omega^{p^u+p^v}(\omega+\beta)^{p^t+p^s}) [ (1 - \omega^{p^v+p^u+p^t}(\omega+\beta)^{p^s}) U_1^{p^v}U_2^{p^u}U_3^{p^t}U_4^{p^s}\cr
&-\,  (1 - \omega^{p^u}(\omega+\beta)^{p^s+p^t+p^v}) U_1^{p^u}U_2^{p^v}U_3^{p^s}U_4^{p^t}]\cr
&=\, 0.
\end{split}
\end{equation}
\end{proof}

\noindent Then we have the following proposition discussing case III, $\omega^{p^v+p^u} \neq 1, (\omega+\beta)^{p^v+p^u} \neq 1, t = s$ and $p \neq 2.$ In \cite{TN-2014}, it is shown we can present this case as follows:
\begin{prop} 
$\Gamma(p^v, p^u, p^t, p^s) = F(p^v, p^t+ p^s, p^u)$ is a $3$-cocycle.
\end{prop}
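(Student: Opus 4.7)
The plan is to verify $\delta F(p^v, p^t + p^s, p^u) = 0$ by direct substitution into the coboundary formula \eqref{E3.4}, exactly in the style of the earlier cases. Since $t = s$ here, the exponent on $U_2$ becomes $p^t + p^s = 2p^t$, so the cochain to be tested is $\phi(U_1, U_2, U_3) := U_1^{p^v} U_2^{2p^t} U_3^{p^u}$.

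The three summands $i = 1, 2, 3$ of $\delta\phi(U_1, U_2, U_3, U_4)$ each require expanding one binomial. The outer binomials $(U_1 + U_2)^{p^v}$, $(U_3 + U_4)^{p^u}$ and their $\omega$-scaled counterparts collapse under Frobenius into two monomials each. The middle binomial is the new feature: the hypothesis $p \neq 2$ allows us to write
\[
(U_2 + U_3)^{2p^t} \;=\; (U_2^{p^t} + U_3^{p^t})^2 \;=\; U_2^{2p^t} + 2\, U_2^{p^t} U_3^{p^t} + U_3^{2p^t},
\]
and analogously for $(\omega U_2 + (\omega+\beta) U_3)^{2p^t}$, so an honest cross-term with coefficient $2$ survives.

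Next I would collect the contributions to each distinct monomial in $\delta\phi$. Two monomials, $U_1^{p^v} U_3^{2p^t} U_4^{p^u}$ (appearing in both $i=1$ and $i=2$) and $U_1^{p^v} U_2^{2p^t} U_4^{p^u}$ (appearing in both $i=2$ and $i=3$), have coefficients that cancel purely algebraically without invoking any hypothesis from $Q$. The monomial $U_2^{p^v} U_3^{2p^t} U_4^{p^u}$ comes only from $i=1$ with coefficient $-(1 - (\omega+\beta)^{p^v+2p^t+p^u})$, which vanishes since the product $(\omega+\beta)^{p^v+p^t} \cdot (\omega+\beta)^{p^u+p^t} = (\omega+\beta)^{p^v+p^u+2p^t} = 1$. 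Symmetrically, $U_1^{p^v} U_2^{2p^t} U_3^{p^u}$ appears only from $i=3$ with coefficient $-(1 - \omega^{p^v+2p^t+p^u})$, which is $0$ for the analogous reason. The genuinely new monomial is $U_1^{p^v} U_2^{p^t} U_3^{p^t} U_4^{p^u}$, arising from the middle of the expansion above with coefficient $2\bigl(1 - \omega^{p^v+p^t}(\omega+\beta)^{p^t+p^u}\bigr)$; this vanishes because $\omega^{p^v+p^t} = 1$ and $(\omega+\beta)^{p^t+p^u} = 1$ are each among the defining relations of $Q$.

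The main obstacle is purely bookkeeping: one must verify that all six monomial types arising in the expansion of $\delta\phi$ are matched correctly and, in particular, that the unusual monomial $U_1^{p^v} U_2^{p^t} U_3^{p^t} U_4^{p^u}$, which has no analogue in the earlier propositions, is killed by the simultaneous validity of two separate relations from $Q$ rather than by a single combined one. The hypothesis $p \neq 2$ has a clean conceptual role here: it is precisely what prevents $2p^t$ from degenerating into the prime power $p^{t+1}$, forcing the middle cross-term to be nonzero and making the additional $Q$-relations genuinely necessary.
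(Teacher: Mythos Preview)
Your plan is correct and is precisely the approach the paper has in mind: a direct expansion of $\delta\bigl(U_1^{p^v}U_2^{2p^t}U_3^{p^u}\bigr)$ following the template of Proposition~\ref{CaseI}, with the middle binomial factored via $(x+y)^{2p^t}=(x^{p^t}+y^{p^t})^2$ since $p\neq 2$. The bookkeeping you describe---algebraic cancellation of the $U_1^{p^v}U_3^{2p^t}U_4^{p^u}$ and $U_1^{p^v}U_2^{2p^t}U_4^{p^u}$ terms, vanishing of the extremal terms via $\omega^{p^v+p^t}\omega^{p^u+p^t}=1$ and $(\omega+\beta)^{p^v+p^t}(\omega+\beta)^{p^u+p^t}=1$, and vanishing of the new cross-term $U_1^{p^v}U_2^{p^t}U_3^{p^t}U_4^{p^u}$ via $\omega^{p^v+p^t}=1$ and $(\omega+\beta)^{p^u+p^t}=1$ separately---is exactly right (your overall signs on a couple of the vanishing terms differ from the paper's convention, but this is immaterial since each such coefficient is zero).
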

\begin{proof}
The proof is similar to that of Proposition ~\ref{CaseI}.
\end{proof}
\noindent Then we have the following proposition discussing cases IV and V, $\omega^{p^v+p^u} \neq 1, (\omega+\beta)^{p^v+p^u} \neq 1, u \leq v < t < s$ and $\omega^{p^v} = \omega^{p^u}$, $(\omega+\beta)^{p^v} = (\omega+\beta)^{p^u}$ when $p \neq 2$ and $\omega^{p^v+p^u} \neq 1,(\omega+\beta)^{p^v+p^u} \neq 1,  u < v < t \leq s$ and $\omega^{p^v} = \omega^{p^u}$, $(\omega+\beta)^{p^v} = (\omega+\beta)^{p^u}$ when $p = 2.$In \cite{TN-2014}, it is shown we can present this case as follows:
\begin{prop} 

$\Gamma(p^v, p^u, p^t, p^s) = F(p^t, p^v+p^u, p^s)$ is a $3$-cocycle.

\end{prop}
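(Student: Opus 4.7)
The plan is to carry out a direct computation of $\delta(F(p^t, p^v+p^u, p^s))$ parallel to the one used in Proposition \ref{CaseI}. Writing $f(U_i) = (\omega+\beta)U_i$ and applying \eqref{E3.4} to the cochain $\phi(U_1,U_2,U_3) = U_1^{p^t} U_2^{p^v+p^u} U_3^{p^s}$, I would first obtain a sum
\begin{equation*}
\delta(F(p^t,p^v+p^u,p^s)) = A_1 - A_2 + A_3,
\end{equation*}
where $A_i$ collects the contribution from splitting the $i$-th slot. Only $A_2$ depends nontrivially on the decomposition $p^v+p^u$; the other two behave just like in the proof of Proposition \ref{CaseI}.

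Next I would expand every occurrence of $(x+y)^{p^v+p^u}$ using the characteristic-$p$ Frobenius factorization $(x+y)^{p^v+p^u} = (x^{p^v}+y^{p^v})(x^{p^u}+y^{p^u})$. This produces a sum of pure-power monomials $U_{i_1}^{p^{e_1}}U_{i_2}^{p^{e_2}}U_{i_3}^{p^{e_3}}U_{i_4}^{p^{e_4}}$, each carrying a coefficient of the shape $1 - \omega^{A}(\omega+\beta)^{B}$, where $A$ and $B$ are sums of elements of $\{p^v,p^u,p^t,p^s\}$. I would then group monomials with identical support and verify that each accompanying coefficient vanishes using the standing $Q$-relations $\omega^{p^v+p^t} = \omega^{p^u+p^s} = (\omega+\beta)^{p^v+p^t} = (\omega+\beta)^{p^u+p^s} = 1$, together with the case-specific equalities $\omega^{p^v} = \omega^{p^u}$ and $(\omega+\beta)^{p^v} = (\omega+\beta)^{p^u}$. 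The latter are exactly what is needed to convert a coefficient like $1 - \omega^{p^t+p^u}(\omega+\beta)^{p^s+p^v}$ into $1 - \omega^{p^t+p^v}(\omega+\beta)^{p^s+p^u}$, after which the $Q$-relations make it vanish.

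The main obstacle I expect is bookkeeping rather than a new idea: because the middle factor $U_2^{p^v+p^u}$ fragments into two Frobenius pieces in both the untwisted and the twisted terms of $A_2$, there are strictly more monomials to pair up than in Proposition \ref{CaseI}. I also have to be careful that the orderings in Case IV ($u \le v < t < s$, $p \neq 2$) and Case V ($u < v < t \le s$, $p = 2$), together with the supplementary constraint $u < t$ enforced in $Q$ when $p = 2$, prevent any accidental coalescing of monomials such as $U_2^{p^u}U_3^{p^t}$ with $U_2^{p^t}U_3^{p^u}$; without this separation, certain paired cancellations would collapse into a single unmatched term. Once every pairing is verified in both cases, all coefficients vanish and $\delta(F(p^t,p^v+p^u,p^s)) = 0$, proving that $\Gamma(p^v,p^u,p^t,p^s)$ is a $3$-cocycle.
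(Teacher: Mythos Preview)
Your proposal is correct and follows precisely the approach the paper intends: the paper's proof consists of the single line ``The proof is similar to that of Proposition~\ref{CaseI},'' and your outline is exactly a direct computation of $\delta(F(p^t,p^v+p^u,p^s))$ via \eqref{E3.4} and the Frobenius expansion $(x+y)^{p^v+p^u}=(x^{p^v}+y^{p^v})(x^{p^u}+y^{p^u})$, with the case-specific identities $\omega^{p^v}=\omega^{p^u}$, $(\omega+\beta)^{p^v}=(\omega+\beta)^{p^u}$ supplying the extra cancellations not covered by the standing $Q$-relations. You have in fact written out considerably more detail than the paper provides.
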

\begin{proof} 
The proof is similar to that of Proposition ~\ref{CaseI}.
\end{proof}

\begin{thm}
Fix $\omega, \beta \in \f_q$ with $\omega \neq 0, \pm 1$. Let $X$ be the corresponding Alexander $f$-quandle on $\f_q$ where  $H_Q^2((X, *, f); \f_q)  \cong 0$. Then the set  \\
\begin{equation*}
\begin{split}
& I = \{F(p^v, p^u, p^t) \mid \omega^{p^v+p^u+p^t} = (\omega+\beta)^{p^v+p^u+p^t} = 1, p^v < p^u < p^t <q\}\cr
&\cup \,  \{F(p^v, p^u, 0) \mid \omega^{p^v+p^u} = (\omega+\beta)^{p^v+p^u} = 1, p^v < p^u  <q\}\cr
&\cup  \, \{E_0(p \cdot p^v, p^u) \mid \omega^{p^{v+1}+p^u} = (\omega+\beta)^{p^{v+1}+p^u} = 1, p^v < p^u < q\}\cr
&\cup \,  \{E_1(p^v, p \cdot p^u) \mid  \omega^{ p^v+ p^{u+1}} = (\omega+\beta)^{p^v+ p^{u+1}} = 1, p^v \leq p^u < q\}\cr
&\cup \,  \{\Gamma(p^v, p^u, p^t, p^s) \mid (p^v, p^u, p^t, p^s) \in Q(q)\}
\end{split}
\end{equation*}
provides a basis of the third cohomology $H_Q^3((X, *, f); \f_q)$.

\end{thm}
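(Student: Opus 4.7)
The plan is to follow Mochizuki's method in \cite{TM-2005}, adapted to the $f$-quandle setting. The key structural observation is that, by equation \eqref{E2.1}, the coboundary $\delta$ sends a monomial of total degree $d$ in $U_1, \ldots, U_n$ to a sum of monomials of the same total degree. Consequently, the cochain complex $(C^*, \delta)$ decomposes as a direct sum $\bigoplus_d (C^*_d, \delta)$ graded by polynomial degree, and it suffices to exhibit a basis of each graded piece $H_Q^3(C^*_d)$ separately. Every element of $I$ has already been verified to be a $3$-cocycle in the preceding propositions, so only linear independence and spanning remain.

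For linear independence, one expands each member of $I$ in the monomial basis $\{U_1^a U_2^b U_3^c\}$ of $C^3_d$ and, using \eqref{E2.1} to compute $\delta(U_1^a U_2^b)$ on the monomial basis of $C^2_d$, verifies directly that no nontrivial $\f_q$-linear combination of elements of $I$ can lie in $\delta(C^2_d)$. The hypothesis $H_Q^2((X,*,f);\f_q) \cong 0$ is used here: it ensures that the degrees of freedom in $\delta(C^2)$ are exactly accounted for, with no further collapse coming from $C^1$, so the count of independent cocycle classes in each multi-degree matches what is visible from $I$.

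The heart of the argument is showing that every $3$-cocycle is cohomologous to an element of the $\f_q$-span of $I$. The underlying algebraic input is the characteristic $p$ identity $(x+y)^{p^k} = x^{p^k} + y^{p^k}$. Applied to a monomial $U_1^{p^v} U_2^{p^u} U_3^{p^t}$, it causes most cross terms in $\delta$ to collapse, leaving only coefficients of the form $1 - \omega^{\sum p^{\cdot}}$ and $1 - (\omega+\beta)^{\sum p^{\cdot}}$; when both such factors vanish the monomial is a cocycle (the $F$ family), when only one vanishes a correction term built from $\chi$ restores the cocycle condition ($E_0$, $E_1$), and when paired resonances occur a four-term combination ($\Gamma$) is required. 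The five cases in $Q$ enumerate exactly the distinct resonance patterns that arise, and the asymmetric conditions $v < t$, $u < s$, $u \leq t$ eliminate duplicates. The main obstacle will be this last step: one must carefully verify that the case analysis I--V together with the inequality restrictions cover all resonance possibilities without double counting, and that the normal form procedure terminates with a combination lying in the $\f_q$-span of $I$. A dimension count matching $|I|$ against $\dim H_Q^3(C^*_d)$ computed from an Euler characteristic of $(C^*_d, \delta)$ restricted to each graded piece provides a clean verification.
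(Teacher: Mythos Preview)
The paper does not actually prove this theorem. In the introduction the authors state explicitly that ``The proofs of this theorems are similar to that of \cite{TM-2005}. These proofs will appear in future work.'' So the paper's own treatment is simply a deferral to Mochizuki's method, and your proposal is precisely an outline of that same method adapted to the $f$-quandle coboundary \eqref{E3.4}--\eqref{E2.1}. In that sense your approach and the paper's intended approach coincide.

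As an outline your sketch is sound: the total-degree grading via \eqref{E2.1}, the reduction to $p$-power exponents through the Frobenius identity, the role of $\chi$ as the correction term when only one of the resonance conditions holds, and the case split defining $Q$ are all the correct structural ingredients from \cite{TM-2005}. What remains unexecuted---in your proposal as in the paper---is the actual combinatorial bookkeeping: the explicit normal-form reduction showing that every cocycle in each $C^3_d$ is cohomologous to a combination from $I$, and the verification that the listed elements are independent modulo coboundaries. You flag this yourself (``The main obstacle will be this last step''), so you already recognize that the plan is not yet a proof. One minor imprecision worth tightening: your account of where the hypothesis $H_Q^2 \cong 0$ enters is vague. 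In Mochizuki's argument this hypothesis is used concretely in the inductive reduction, guaranteeing that certain $2$-cocycles arising as partial coboundaries are themselves coboundaries, which is what allows the normal-form procedure to terminate cleanly; it is not merely a bookkeeping device for a degree-of-freedom count.
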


\begin{exmp}\label{EE}
Let $p$ be an odd prime and $v, u$ and $ t$ be non-negative integers. Let $\omega = -1$ and $\beta = 2$. Hence, $\omega^{p^v+p^u+p^t} \neq 1$ and $(\omega+\beta)^{p^v+p^u+p^t} = 1$. The we have the following. 

\begin{enumerate}
\item $F(p^v, p^u, p^t)$ is not a $3$-cocycle. 
\item $F(p^v, p^u, 0)$ is a $3$-cocycle since $\omega^{p^v+p^u} = 1$ and $(\omega+\beta)^{p^v+p^u} = 1$. Also, $E_0(p^{v+1},p^u)$ and $E_1(p^v, p^{u+1})$ are $3$-cocycles.

\noindent Moreover, 

\noindent $Q(q) = \{ (p^v, p^u,p^t, p^s) \mid p^u \leq p^t, p^v < p^t, p^u < p^s\}$, and $\omega^{p^v+p^u} = (\omega+\beta)^{p^v+p^u} = 1$ for any $(p^v, p^u, p^t, p^s) \in Q(q)$.

\vskip 0.1in

\noindent Therefore, 

\[
\begin{split}
&\{F(p^v, p^u, 0) \mid 0<p^v<p^u<q\} \cup \{E_0(p^{v+1}, p^u)\mid p^v<p^u<q\} \cup \{E_1(p^v, p^{u+1}) \mid p^v < p^u<q\}\cr
&\cup \{F(p^v, p^u+p^t, p^s)\mid p^u \leq p^t, p^v<p^t, p^u<p^s, p^i<q,\, \textnormal{for all}\,i \in \{v, u, t,s\}\}
\end{split}
\]

is a basis for the cohomology group $H_Q^3((X, *, f); \f_q)$.

\end{enumerate} 

\end{exmp}

\begin{rmk}
Example~\ref{EE} shows that when $\beta=1-\omega$, the basis for the cohomology group $H_Q^3((X, *, f); \f_q)$ above is the same as the basis for the cohomology group $H_Q^3((X, *); \f_q)$, explained in \cite[Subsection~2.4.1]{TM-2005}. 
\end{rmk}

\begin{exmp}
Let $f(x)=x^3+x^2+1\in \f_2[x]$ and consider $\f_{8}=\f_2[x]/(f)$. Let $\omega$ be a primitive element of $\f_{8}$. Then the order of $\omega$ is $7$. Let $\beta= \omega^{2^2}$. Note that $\omega^{2^2}=\omega^3 +\omega$ and $\omega^{2^2}$ is also a primitive element of $\f_{8}$ since it is a conjugate of $\omega$ with respect to $\f_2$. We have
$$\omega^{2^0+2^1+2^2}=1\,\,and \,\, (\omega+\beta)^{2^0+2^1+2^2}=1,$$

but $\omega^{2^i+2^j}\neq 1$ for $i, j\in \{0,1,2\}$. Hence $H_Q^3((X, *, f); \f_{8})$ is generated by $\{F(2^0, 2^1, 2^2)$\}. 
\end{exmp}

\section{The 4-cocycles}\label{4cocy}

In this section, we give some propositions showing some particular polynomials are $4$-cocycles.  The main theorem gives basis for the cohomology group $H_Q^4((X, *, f); \f_q)$ under the condition that the group $H_Q^2((X, *, f); \f_q)$ is trivial.

\begin{prop}
If \,$\omega^{p^v+p^u+p^t+p^s}=1$ and $(\omega+\beta)^{p^v+p^u+p^t+p^s}=1$, then the polynomial $U_1^{p^v}U_2^{p^u}U_3^{p^t}U_4^{p^s}$ is a $4$-cocycle.
\end{prop}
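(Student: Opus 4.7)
The plan is to mimic almost exactly the strategy used in the preceding $3$-cocycle case (equation \eqref{NN1}) and simply extend the bookkeeping by one variable. That is, I would apply the reduced coboundary formula \eqref{E3.4} (or equivalently, the recursive form \eqref{E2.1}) to the monomial $\phi = U_1^{p^v}U_2^{p^u}U_3^{p^t}U_4^{p^s}$, producing a sum over $i=1,2,3,4$ of a pair of terms, and then use the characteristic-$p$ Frobenius identity $(x+y)^{p^k} = x^{p^k} + y^{p^k}$ to split every binomial that appears. The point is that, with this identity in hand, each substitution $U_i \mapsto U_i + U_{i+1}$ (or $U_i \mapsto \omega U_i + (\omega+\beta) U_{i+1}$) produces only two monomials rather than a full binomial expansion.

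Concretely, writing $a = p^v$, $b = p^u$, $c = p^t$, $d = p^s$, I would expand the four contributions:
\begin{align*}
i=1:\; & \bigl(1-\omega^{a}(\omega+\beta)^{b+c+d}\bigr)U_1^{a}U_3^{b}U_4^{c}U_5^{d} + \bigl(1-(\omega+\beta)^{a+b+c+d}\bigr)U_2^{a}U_3^{b}U_4^{c}U_5^{d},\\
i=2:\; & \bigl(1-\omega^{a+b}(\omega+\beta)^{c+d}\bigr)U_1^{a}U_2^{b}U_4^{c}U_5^{d} + \bigl(1-\omega^{a}(\omega+\beta)^{b+c+d}\bigr)U_1^{a}U_3^{b}U_4^{c}U_5^{d},\\
i=3:\; & \bigl(1-\omega^{a+b+c}(\omega+\beta)^{d}\bigr)U_1^{a}U_2^{b}U_3^{c}U_5^{d} + \bigl(1-\omega^{a+b}(\omega+\beta)^{c+d}\bigr)U_1^{a}U_2^{b}U_4^{c}U_5^{d},\\
i=4:\; & \bigl(1-\omega^{a+b+c+d}\bigr)U_1^{a}U_2^{b}U_3^{c}U_4^{d} + \bigl(1-\omega^{a+b+c}(\omega+\beta)^{d}\bigr)U_1^{a}U_2^{b}U_3^{c}U_5^{d},
\end{align*}
each carrying the alternating sign $(-1)^{i+1}$ from the coboundary formula. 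Then I would simply collect like monomials: the four "interior" monomials $U_1^{a}U_3^{b}U_4^{c}U_5^{d}$, $U_1^{a}U_2^{b}U_4^{c}U_5^{d}$, $U_1^{a}U_2^{b}U_3^{c}U_5^{d}$ each appear in two consecutive blocks with opposite signs and identical coefficients, hence cancel identically. This telescoping is the key mechanism, and it is precisely the $4$-variable analogue of the cancellation observed in \eqref{NN1}.

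What remains after the telescoping is exactly the two boundary monomials: $U_2^{a}U_3^{b}U_4^{c}U_5^{d}$ with coefficient $\bigl(1-(\omega+\beta)^{a+b+c+d}\bigr)$ from $i=1$, and $U_1^{a}U_2^{b}U_3^{c}U_4^{d}$ with coefficient $-\bigl(1-\omega^{a+b+c+d}\bigr)$ from $i=4$. The hypotheses $\omega^{p^v+p^u+p^t+p^s}=1$ and $(\omega+\beta)^{p^v+p^u+p^t+p^s}=1$ kill exactly these two coefficients, and the conclusion $\delta\phi=0$ follows.

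There is no serious obstacle here; the argument is a direct recasting of the $3$-cocycle proof with one extra layer. The only mild nuisance is keeping track of the alternating signs and of the factors $\omega^{(\cdot)}(\omega+\beta)^{(\cdot)}$ picked up from the Frobenius images of $\omega U_i$ and $(\omega+\beta) U_{i+1}$ together with the multiplier $f(U_j) = (\omega+\beta)U_j$ applied to later coordinates; a careful table indexed by which monomial each of the eight summands contributes to makes the telescoping visible at a glance, and that table is what I would present in the write-up.
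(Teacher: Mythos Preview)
Your proposal is correct and follows essentially the same approach as the paper's own proof: apply the coboundary formula \eqref{E3.4}, use Frobenius to split each $(U_i+U_{i+1})^{p^k}$ and $(\omega U_i+(\omega+\beta)U_{i+1})^{p^k}$ into two monomials, observe the telescoping cancellation of the interior monomials, and kill the two surviving boundary terms with the hypotheses. (Minor slip: you wrote ``four interior monomials'' but listed and used three, which is the correct count.)
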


\begin{proof}
\begin{equation}\label{NN2}
\begin{split}
&\delta(U_1^{p^v}U_2^{p^u}U_3^{p^t}U_4^{p^s})\cr
&=\, ((U_1+U_2)^{p^v}-  (\omega+\beta)^{p^u+p^t+p^s}\,(\omega U_1+ (\omega+\beta)U_2)^{p^v}) U_3^{p^u}U_4^{p^t}U_5^{p^s}\cr
&-\, U_1^{p^v} \cdot ( (U_2+U_3)^{p^u} - \omega^{p^v}\,(\omega+\beta)^{p^t+p^s}\,(\omega U_2+(\omega+\beta)U_3)^{p^u})\,U_4^{p^t}U_5^{p^s}\cr
&+\, U_1^{p^v} \cdot U_2^{p^u} \cdot ( (U_3+U_4)^{p^t} - \omega^{p^v+p^u} (\omega+\beta)^{p^s} (\omega U_3+(\omega+\beta)U_4)^{p^t})\,U_5^{p^s} \cr
&- \ U_1^{p^v}  \cdot U_2^{p^u}  \cdot U_3^{p^t}  \cdot ( (U_4+U_5)^{p^s} - \omega^{p^v+p^u+p^t}\,(\omega U_4+(\omega+\beta)U_5)^{p^s})\cr
&=\, (1-\omega^{p^v}\, (\omega+\beta)^{p^u+p^t+p^s})\,U_1^{p^v}U_3^{p^u}U_4^{p^t}U_5^{p^s} + (1-(\omega+\beta)^{p^v+p^u+p^t+p^s})U_2^{p^v}U_3^{p^u}U_4^{p^t}U_5^{p^s}\cr
&-\, (1 - \omega^{p^v+p^u}(\omega+\beta)^{p^t+p^s})U_1^{p^v}U_2^{p^u}U_4^{p^t}U_5^{p^s} - (1 - \omega^{p^v}(\omega+\beta)^{p^u+p^t+p^s})U_1^{p^v}U_3^{p^u}U_4^{p^t}U_5^{p^s}\cr
&+\, (1 - \omega^{p^v+p^u+p^t}(\omega+\beta)^{p^s})\,U_1^{p^v}U_2^{p^u}U_3^{p^t}U_5^{p^s} + (1 - \omega^{p^v+p^u}(\omega+\beta)^{p^s+p^t})U_1^{p^v}U_2^{p^u}U_4^{p^t}U_5^{p^s}\cr
&-\, (1 - \omega^{p^v+p^u+p^t+p^s})U_1^{p^v}U_2^{p^u}U_3^{p^t}U_4^{p^s} -\,  (1 - \omega^{p^v+p^u+p^t} (\omega+\beta)^{p^s})U_1^{p^v}U_2^{p^u}U_3^{p^t}U_5^{p^s}.
\end{split}
\end{equation}

Since $\omega^{p^v+p^u+p^t+p^s}=1$ and $(\omega+\beta)^{p^v+p^u+p^t+p^s}=1$, the right hand side of \eqref{NN2} is $0$. This completes the proof. 

\end{proof}

We recall $\chi(x, y)= \displaystyle\sum_{i=1}^{p-1}\,(-1)^{i-1}\cdot i^{-1}\cdot x^{p-i}\cdot y^i \equiv \displaystyle\frac{1}{p} ((x+y)^p - x^p - y^p) \pmod{p}$. 

\begin{prop} 
If $\omega^{p^{u+1}+p^t+p^s} = 1$ and $(\omega+\beta)^{p^{u+1}+p^t+p^s}=1$, then the polynomial $\Big(\chi(U_1, U_2)^{p^u} - (\omega+\beta)^{p^t+p^s}\chi(\omega\,U_1, (\omega+\beta)\,U_2)^{p^u}\Big)U_3^{p^t}U_4^{p^s}$ is a $4$-cocycle.
\end{prop}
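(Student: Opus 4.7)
The plan is to mirror the proof of the three-variable analogue, namely the proposition asserting that $E_0(p^s, p^h)$ is a $3$-cocycle, and extend it by inserting one additional monomial factor. First I would write
\[
h(U_1, U_2) := \chi(U_1, U_2)^{p^u} - (\omega+\beta)^{p^t+p^s}\,\chi(\omega U_1, (\omega+\beta)U_2)^{p^u},
\]
so that the $4$-cochain in question factors as $\phi(U_1,U_2,U_3,U_4) = h(U_1,U_2)\cdot U_3^{p^t}\cdot U_4^{p^s}$. Because $\chi(x,y)$ is homogeneous of degree $p$, the polynomial $h$ has total degree $p^{u+1}$ in $(U_1,U_2)$, hence $\phi$ is homogeneous of total degree $p^{u+1}+p^t+p^s$, which matches the exponents appearing in the two hypotheses.

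Next I apply the coboundary formula \eqref{E3.4} with $n=4$ to $\phi$, splitting $\delta \phi$ into four contributions indexed by $i=1,2,3,4$. The contributions coming from $i=3$ and $i=4$ only touch the monomial tail $U_3^{p^t} U_4^{p^s}$; since $p^t$ and $p^s$ are $p$-powers and $\mathrm{char}\,\mathbb{F}_q=p$, the Frobenius identity $(U+V)^{p^k}=U^{p^k}+V^{p^k}$ collapses these pieces to monomials in $U_1,\dots,U_5$ multiplied by scalars of the form $1-\omega^{A}(\omega+\beta)^{B}$ with $A+B=p^{u+1}+p^t+p^s$; the extremal scalars (with $A=0$ or $B=0$) vanish by the twin hypotheses $\omega^{p^{u+1}+p^t+p^s}=1$ and $(\omega+\beta)^{p^{u+1}+p^t+p^s}=1$, and the intermediate ones are arranged to pair against the $i=1,2$ contributions. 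For $i=1$ and $i=2$, the function $h$ is evaluated at merged or Frobenius-twisted arguments, for example $h(U_1+U_2,U_3)$ and $h(\omega U_1+(\omega+\beta)U_2,(\omega+\beta)U_3)$; the key algebraic input is that $\chi$ is a $2$-cocycle for the additive group, i.e.
\[
\chi(U_2,U_3)+\chi(U_1,U_2+U_3)=\chi(U_1+U_2,U_3)+\chi(U_1,U_2),
\]
an identity that remains valid after raising both sides to the $p^u$-th power in characteristic $p$, with an analogous relation holding for the Frobenius-twisted half $\chi(\omega\cdot,(\omega+\beta)\cdot)^{p^u}$.

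The step I expect to be the main obstacle is the bookkeeping of the $i=1$ and $i=2$ contributions. Expanding $\chi(\cdot,\cdot)^{p^u}$ on merged arguments such as $U_1+U_2$ produces mixed monomials of total degree $p^{u+1}$ in $U_1, U_2, U_3$, and one must verify carefully that each such monomial pairs with a corresponding piece coming from the Frobenius-twisted half of $h$ and cancels under the combined use of the additive cocycle identity and the hypotheses on $\omega$ and $\omega+\beta$. Once this combinatorial matching is in place, the remainder of the argument is a degree-four analogue of the calculation carried out in equations \eqref{E3.1}--\eqref{E3.2} for $E_0(p^s, p^h)$, and the total expression collapses term-by-term to $0$.
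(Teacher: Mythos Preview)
Your approach is correct and is essentially the paper's: define the same $h(U_1,U_2)$, split $\delta\bigl(h\cdot U_3^{p^t}U_4^{p^s}\bigr)$ into a $\delta(h)$-piece and the tail-differentiation pieces, and invoke the additive $2$-cocycle identity for $\chi$ on the former. The computation is in fact cleaner than you anticipate: rather than expanding into mixed monomials, the paper applies the $\chi$-cocycle identity at the level of $h$ itself to collapse $\delta(h)(U_1,U_2,U_3)$ to $(1-\omega^{p^{u+1}}(\omega+\beta)^{p^t+p^s})\,h(U_1,U_2)-(1-(\omega+\beta)^{p^{u+1}+p^t+p^s})\,h(U_2,U_3)$, after which the six resulting terms of $\delta\phi$ either vanish by the two hypotheses or cancel in pairs.
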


\begin{proof}

Let $h(U_1, U_2)=\,\chi(U_1, U_2)^{p^u} - (\omega+\beta)^{p^t+p^s}\chi(\omega\,U_1, (\omega+\beta)\,U_2)^{p^u}$. 

We show that $h(U_1, U_2)\,U_3^{p^t}\,U_4^{p^s}$ is a $4$-cocycle. 

\begin{equation}\label{NN7}
\begin{split}
&\delta(h(U_1, U_2)\,U_3^{p^t}U_4^{p^s}) \cr
&= \delta(h(U_1, U_2)\,U_4^{p^t}U_5^{p^s} \cr
&- h(U_1, U_2)\, \big( (U_3+U_4)^{p^t}\,-\,\omega^{p^{u+1}}\,(\omega +\beta)^{p^{s}}\,(\omega\,U_3+(\omega +\beta)\,U_4)^{p^t}\big)\,U_5^{p^s} \cr
&+ h(U_1, U_2)\,U_3^{p^t}\,\big( (U_4+U_5)^{p^s}\,-\,\omega^{p^{u+1}+p^t}\,(\omega\,U_4+(\omega +\beta)\,U_5)^{p^s}\big)\cr
&= (1-\omega^{p^{u+1}}\,(\omega +\beta)^{p^s+p^t})\,h(U_1, U_2)\,U_4^{p^t}\,U_5^{p^s}-(1-(\omega +\beta)^{p^{u+1}+p^t+p^s})\,h(U_2, U_3)\,U_4^{p^t}\,U_5^{p^s} \cr
&- (1-\omega^{p^{u+1}+p^t}\,(\omega +\beta)^{p^{s}})\,h(U_1, U_2)\,U_3^{p^t}\,U_5^{p^s}-(1-\omega^{p^{u+1}}\,(\omega +\beta)^{p^{s}+p^t})\,h(U_1, U_2)\,U_4^{p^t}\,U_5^{p^s} \cr
&+ (1-\omega^{p^{u+1}+p^t+p^s})\,h(U_1, U_2)\,U_3^{p^t}\,U_4^{p^s} + (1-\omega^{p^{u+1}+p^t}\,(\omega +\beta)^{p^s})\,h(U_1, U_2)\,U_3^{p^t}\,U_5^{p^s}.
\end{split}
\end{equation}

Since $\omega^{p^{u+1}+p^t+p^s} = 1$ and $(\omega+\beta)^{p^{u+1}+p^t+p^s}=1$, the right hand side of \eqref{NN7} is $0$. This completes the proof. 

\end{proof}

\begin{prop} 
If $\omega^{p^v+p^{t+1}+p^s} = 1$ and $(\omega+\beta)^{p^v+p^{t+1}+p^s}=1$, then the polynomial $ U_1^{p^v} \Big(\chi(U_2, U_3)^{p^t} -  \omega^{p^v} (\omega+\beta)^{p^s}\chi(\omega U_2, (\omega+\beta)U_3)^{p^t} \Big) U_4^{p^s}$ is a $4$-cocycle.
\end{prop}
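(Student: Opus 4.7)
The plan is to follow the template of the two preceding propositions, with the twist that $h$ is now sandwiched between $U_1^{p^v}$ and $U_4^{p^s}$ rather than sitting flush against a boundary. Define
$$h(U_2, U_3) := \chi(U_2, U_3)^{p^t} - \omega^{p^v}(\omega+\beta)^{p^s}\,\chi(\omega\,U_2, (\omega+\beta)\,U_3)^{p^t},$$
so that the cochain in question is $\phi := U_1^{p^v}\,h(U_2, U_3)\,U_4^{p^s}$. Notice that the scalar prefactor $\omega^{p^v}(\omega+\beta)^{p^s}$ in $h$ is chosen precisely to absorb the Frobenius twist $\omega^{p^v}$ that the coboundary introduces from $U_1^{p^v}$ on the left, together with the twist $(\omega+\beta)^{p^s}$ coming from $U_4^{p^s}$ on the right. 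This alignment of scalars is the structural reason the proposition should hold.

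First I would apply the cocycle formula \eqref{E3.4} directly to $\phi$, using the Frobenius identity $(a+b)^{p^k} = a^{p^k} + b^{p^k}$ in characteristic $p$ to expand every $(U_i + U_{i+1})^{p^v}$ and $(U_i + U_{i+1})^{p^s}$ coming from the outer monomial factors. Since $\chi(x,y)^{p^t}$ is homogeneous of total degree $p^{t+1}$, the resulting expansion consists of homogeneous terms of total degree $p^v + p^{t+1} + p^s$ in $U_1, \ldots, U_5$, each carrying a coefficient of the form $1 - \omega^A (\omega+\beta)^B$ where $A + B$ ranges over subsets of $\{p^v,\, p^{t+1},\, p^s\}$. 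By the same bookkeeping used to derive equation \eqref{NN7}, almost every such term is matched by a sign-opposite partner with the same $\omega^A (\omega+\beta)^B$ coefficient, and these pairs cancel identically.

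The only two residual summands should be those proportional to $1 - \omega^{p^v + p^{t+1} + p^s}$ and $1 - (\omega+\beta)^{p^v + p^{t+1} + p^s}$, respectively, so the hypotheses $\omega^{p^v + p^{t+1} + p^s} = 1$ and $(\omega+\beta)^{p^v + p^{t+1} + p^s} = 1$ force both coefficients to vanish, yielding $\delta(\phi) = 0$. The main obstacle I anticipate is bookkeeping rather than any new idea: because the coboundary at $i = 2, 3$ acts inside $h$ itself, it produces shifted copies $h(U_1, U_2)$ and $h(U_3, U_4)$ with their own Frobenius-twisted partners, and one must verify that these interior $\chi$-terms cancel via the scalar behavior $\chi(\lambda x, \mu y) = \sum_{i=1}^{p-1} (-1)^{i-1} i^{-1} \lambda^{p-i} \mu^i x^{p-i} y^i$ and the Frobenius identity, rather than via a genuinely new identity for $\chi$. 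Once this term-by-term matching is carried out, the cancellation pattern should be fully analogous to that of the preceding proof.
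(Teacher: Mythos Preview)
Your proposal is correct and follows essentially the same route as the paper: you define $h(U_2,U_3)$ identically, apply the coboundary formula \eqref{E3.4}, handle the outer factors $U_1^{p^v}$ and $U_4^{p^s}$ via Frobenius, and reduce everything to two residual terms with coefficients $1-\omega^{p^v+p^{t+1}+p^s}$ and $1-(\omega+\beta)^{p^v+p^{t+1}+p^s}$ plus an interior bracket in $h$ that must be shown to vanish---exactly the structure of the paper's display \eqref{NN8}. One small sharpening: the interior cancellation does use the standard $2$-cocycle identity $\chi(x+y,z)+\chi(x,y)=\chi(x,y+z)+\chi(y,z)$ (an immediate consequence of $\chi\equiv\frac{1}{p}((x+y)^p-x^p-y^p)$), not merely the homogeneity of $\chi$; the paper leaves this implicit as well, writing only ``it can be shown.''
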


\begin{proof}

Let $h(U_2, U_3)=\,\chi(U_2, U_3)^{p^t} -  \omega^{p^v} (\omega+\beta)^{p^s}\chi(\omega U_2, (\omega+\beta)U_3)^{p^t}$. 

Now we show that $U_1^{p^v}\,h(U_2, U_3)\,U_4^{p^s}$ is a $4$-cocycle. 

\begin{equation}\label{NN8}
\begin{split}
&\delta(U_1^{p^v}\,h(U_2, U_3)\,U_4^{p^s}) \cr
&= \big((U_1+U_2)^{p^v}-\,(\omega +\beta)^{p^{t+1}+p^s}\,(\omega\,U_1+\,(\omega +\beta)\,U_2)^{p^v}\big)\,h(U_3, U_4)\,U_5^{p^s} \cr
&- U_1^{p^v}\, \big( h(U_2+U_3, U_4)\,-\,\omega^{p^{v}}\,(\omega +\beta)^{p^{s}}\,h(\omega\,U_2+(\omega +\beta)\,U_3, (\omega +\beta)\,U_4)\big)\,U_5^{p^s} \cr
&+  U_1^{p^v}\,\big( h(U_2, U_3+U_4)\,-\,\omega^{p^{v}}\,(\omega +\beta)^{p^{s}}\,h(\omega\,U_2, \omega\,U_3+ (\omega +\beta)\,U_4)\big)\,U_5^{p^s} \cr
&- U_1^{p^v}\,h(U_2, U_3)\,\big((U_4+U_5)^{p^s}-\,\omega^{p^v+p^{t+1}}\,(\omega\,U_4+\,(\omega +\beta)\,U_5)^{p^s}\big)\cr
&= U_1^{p^v}\,\Big[(1-\omega^{p^v}\,(\omega +\beta)^{p^{t+1}+p^s})\,h(U_3, U_4)-\,h(U_2+U_3, U_4)+ h(U_2,\,U_3+U_4)\cr
&+\,\omega^{p^v}\,(\omega +\beta)^{p^s}\,h(\omega\,U_2+\,(\omega +\beta)\,U_3, (\omega +\beta)\,U_4) \cr
&-\,\omega^{p^v}\,(\omega +\beta)^{p^s}\,h(\omega\,U_2,\,\omega\,U_3+\,(\omega +\beta)\,U_4)-\,(1-\omega^{p^v+p^{t+1}}\,(\omega +\beta)^{p^s})\,h(U_2, U_3)\Big]\,U_5^{p^s}\cr
&+ (1-(\omega +\beta)^{p^v+p^{t+1}+p^s})\,U_2^{p^v}\,h(U_3, U_4)\, U_5^{p^s}-\,(1-\omega^{p^v+p^{t+1}+p^s})\,U_1^{p^v}\,h(U_2, U_3)\,U_4^{p^s}.
\end{split}
\end{equation}

Since 
$$h(U_i, U_{i+1})=\,\chi(U_i, U_{i+1})^{p^t} -  \omega^{p^v} (\omega+\beta)^{p^s}\chi(\omega U_i, (\omega+\beta)\,U_{i+1})^{p^t},$$ 

\noindent and $\omega^{p^v+p^{t+1}+p^s} = 1$ and $(\omega+\beta)^{p^v+p^{t+1}+p^s}=1$, it can be shown that the right hand side of \eqref{NN8} is $0$. This completes the proof. 

\end{proof}

\begin{prop} 
If $\omega^{p^v+p^u+p^{s+1}} = 1$ and $(\omega+\beta)^{p^v+p^u+p^{s+1}}=1$, then the polynomial 
$$U_1^{p^v} U_2^{p^u} \Big(\chi(U_3, U_4)^{p^s} - \omega^{p^v+p^u}\,\chi(\omega\,U_3, (\omega+\beta)\,U_4)^{p^s}\Big)$$ is a $4$-cocycle.
\end{prop}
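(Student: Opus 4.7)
Following the template of the three preceding propositions, I would abbreviate
$$h(U_3, U_4) := \chi(U_3, U_4)^{p^s} - \omega^{p^v+p^u}\chi(\omega U_3, (\omega+\beta) U_4)^{p^s},$$
so that the polynomial of interest is $\phi := U_1^{p^v} U_2^{p^u} h(U_3, U_4)$. Using formula \eqref{E3.4} with $n=4$, I would expand $\delta^4\phi(U_1,\ldots,U_5)$ as an alternating sum of eight terms, a \emph{merge} and a \emph{transform} piece for each $i\in\{1,2,3,4\}$.

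For $i=1,2$ the $h$-block is rigid, so each summand factors as (monomial in $U_1,U_2,U_3$)$\cdot h(U_4,U_5)$. Two easy facts suffice here: Frobenius linearity $(A+B)^{p^k}=A^{p^k}+B^{p^k}$ in characteristic $p$, and the homogeneity identity
$h\bigl((\omega+\beta) U_3,(\omega+\beta) U_4\bigr) = (\omega+\beta)^{p^{s+1}} h(U_3, U_4)$
coming from the fact that each term of $\chi(X,Y)$ has total degree $p$. Collecting like monomials and mirroring the $i=1,2$ block of \eqref{NN7}, the surviving contribution is $-\bigl(1-\omega^{p^v+p^u}(\omega+\beta)^{p^{s+1}}\bigr)U_1^{p^v}U_2^{p^u} h(U_4, U_5)$, plus a side term whose coefficient is $1-(\omega+\beta)^{p^v+p^u+p^{s+1}}=0$ by hypothesis.

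For $i=3,4$ the prefactor $U_1^{p^v}U_2^{p^u}$ persists, and the remaining bracket is
$$h(U_3+U_4, U_5) - h(U_3, U_4+U_5) + \omega^{p^v+p^u}\bigl[h(\omega U_3, \omega U_4 + (\omega+\beta) U_5) - h(\omega U_3 + (\omega+\beta) U_4, (\omega+\beta) U_5)\bigr].$$
Here I would invoke the Witt additive cocycle identity $\chi(Y, Z) - \chi(X+Y, Z) + \chi(X, Y+Z) - \chi(X, Y) = 0$. Raising to $p^s$ preserves it: since $\chi$ has $\f_p$-coefficients, $\chi(X, Y)^{p^s} = \chi(X^{p^s}, Y^{p^s})$, and Frobenius commutes with the $\pm$ signs in characteristic $p$. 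Applying this identity to each of the two $\chi^{p^s}$-summands of $h$ and using the hypothesis $\omega^{p^v+p^u+p^{s+1}} = (\omega+\beta)^{p^v+p^u+p^{s+1}} = 1$ collapses the bracket to exactly $\bigl(1-\omega^{p^v+p^u}(\omega+\beta)^{p^{s+1}}\bigr) h(U_4, U_5)$, cancelling the $i=1,2$ remainder.

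The main obstacle I anticipate is the $i=3$ bookkeeping: the asymmetric scaling $\omega U_i$ versus $(\omega+\beta)U_{i+1}$ in the second $\chi$-factor of $h$ prevents the cocycle identity from being applied in one step. One must split that piece further, rescale using $\chi(aX, aY)=a^p\chi(X, Y)$ to factor out the appropriate power of $(\omega+\beta)$, and then apply the cocycle identity a second time. This is structurally identical to the delicate step in the proof of the preceding proposition, and the resulting calculation is straightforward but lengthy; once the two passes are completed, every remaining coefficient collapses to a factor that vanishes under the hypothesis.
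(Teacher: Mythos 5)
Your proposal follows essentially the same route as the paper: the same factorization of $\delta$ into the $i=1,2$ monomial block and the $i=3,4$ bracket acting on $h$, the same identification of the surviving terms $-(1-\omega^{p^v+p^u}(\omega+\beta)^{p^{s+1}})U_1^{p^v}U_2^{p^u}h(U_4,U_5)$ and the side term with coefficient $1-(\omega+\beta)^{p^v+p^u+p^{s+1}}$, and the same final cancellation. In fact you supply more detail than the paper, which disposes of the closing bracket computation with ``it can be shown''; your explicit appeal to the Witt cocycle identity for $\chi$, its stability under Frobenius, and the rescaling $\chi(aX,aY)=a^p\chi(X,Y)$ is exactly the content of that omitted step.
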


\begin{proof}

Let $h(U_3, U_4)= \chi(U_3, U_4)^{p^s} - \omega^{p^v+p^u}\,\chi(\omega\,U_3, (\omega+\beta)\,U_4)^{p^s}$. 

Now we claim that $U_1^{p^v} U_2^{p^u} h(U_3, U_4)$ is a $4$-cocycle.

\begin{equation}\label{D1}
\begin{split}
&\delta(U_1^{p^v} U_2^{p^u} h(U_3, U_4))\cr
&=\delta(U_1^{p^v}) U_3^{p^u} h(U_4, U_5)-U_1^{p^v} \delta(U_2^{p^u}) h(U_4, U_5))+U_1^{p^v} U_2^{p^u} \delta(h(U_3, U_4))\cr
&=\big((U_1+U_2)^{p^v}-(\omega+\beta)^{p^u+p^{s+1}}\,(\omega\,U_1+(\omega +\beta)\,U_2)^{p^v}\big)U_3^{p^u} h(U_4, U_5)\cr
&-U_1^{p^v}\,\big((U_2+U_3)^{p^u}-\omega^{p^v}(\omega+\beta)^{p^{s+1}}\,(\omega\,U_2+(\omega +\beta)\,U_3)^{p^u}\big) h(U_4, U_5)\cr
&+U_1^{p^v}U_2^{p^u}\,\Big(h(U_3+U_4, U_5)-\omega^{p^v+p^u}\,h(\omega\,U_3+(\omega+\beta)\,U_4, (\omega+\beta)\,U_5)\Big)\cr
&-U_1^{p^v}U_2^{p^u}\,\Big(h(U_3, U_4+U_5)-\omega^{p^v+p^u}\,h(\omega\,U_3, \omega\,U_4+(\omega+\beta)\,U_5)\Big)\cr
&=(1-(\omega+\beta)^{p^v+p^u+p^{s+1}})\,U_2^{p^v}U_3^{p^u}h(U_4, U_5)+U_1^{p^v}U_2^{p^u}\,\Big[h(U_3+U_4, U_5)-h(U_3, U_4+U_5)\cr
&-(1-\omega^{p^v+p^u}\,(\omega+\beta)^{p^{s+1}})\,h(U_4, U_5)+\omega^{p^v+p^u}\,h(\omega\,U_3, \omega\,U_4+(\omega+\beta)\,U_5)\cr
&-\omega^{p^v+p^u}\,h(\omega\,U_3+(\omega+\beta)\,U_4, (\omega+\beta)\,U_5)\Big]
\end{split}
\end{equation}

Since $$h(U_i, U_{i+1})= \chi(U_i, U_{i+1})^{p^s} - \omega^{p^v+p^u}\,\chi(\omega\,U_i, (\omega+\beta)\,U_{i+1})^{p^s},$$ 

$\omega^{p^v+p^u+p^{s+1}} = 1$, and $(\omega+\beta)^{p^v+p^u+p^{s+1}}=1$, it can be shown that the right hand side of \eqref{D1} is $0$. This completes the proof. 

\end{proof}

\begin{prop} 
If $\omega^{p^i+p^j+p^u+p^t+p^s}=1$, $(\omega+\beta)^{p^i+p^j+p^u+p^t+p^s}=1$, $\omega^{p^i+p^j}=\omega^{p^i+p^u}=1$ and $(\omega+\beta)^{p^u+p^t+p^s}=(\omega+\beta)^{p^j+p^t+p^s}=1$, then the polynomial $U_1^{p^i}U_2^{p^j+p^u}U_3^{p^t}U_4^{p^s}$ is a $4$-cocycle.
\end{prop}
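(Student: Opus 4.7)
The plan is to imitate the approach of the preceding propositions in this section by applying the coboundary formula \eqref{E3.4} directly to $\phi = U_1^{p^i}U_2^{p^j+p^u}U_3^{p^t}U_4^{p^s}$ and then expanding everything in characteristic $p$ using the Freshman's dream $(X+Y)^{p^\ell} = X^{p^\ell} + Y^{p^\ell}$. The goal is to verify that every monomial appearing in the resulting expansion of $\delta(\phi)(U_1,\ldots,U_5)$ has coefficient zero under the five stated hypotheses.

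First I would write $\delta(\phi)$ as a sum of four bracketed contributions indexed by $k\in\{1,2,3,4\}$, one for each position at which $\phi$ sees a substitution $U_k \mapsto U_k+U_{k+1}$ paired against $\omega U_k+(\omega+\beta)U_{k+1}$, with the remaining variables scaled by suitable powers of $\omega$ or $(\omega+\beta)$ according to \eqref{E3.4}. For the three pure prime-power factors $U_1^{p^i}$, $U_3^{p^t}$, and $U_4^{p^s}$, the Freshman's dream produces exactly two monomials per substitution. The only non-prime-power factor, $U_2^{p^j+p^u}$, I would expand via
\[
(U_2+U_3)^{p^j+p^u}=(U_2^{p^j}+U_3^{p^j})(U_2^{p^u}+U_3^{p^u})
\]
and its scaled analog, which yields four monomials in the second bracket.

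Next I would collect the expansion of $\delta(\phi)$ by distinct monomials in $U_1,\ldots,U_5$. Exactly seven distinct monomials appear. Three of them appear in two adjacent brackets each with identical scalar factors and opposite signs and therefore cancel without any hypothesis (namely $U_1^{p^i}U_3^{p^j+p^u}U_4^{p^t}U_5^{p^s}$ across brackets one and two, $U_1^{p^i}U_2^{p^j+p^u}U_4^{p^t}U_5^{p^s}$ across brackets two and three, and $U_1^{p^i}U_2^{p^j+p^u}U_3^{p^t}U_5^{p^s}$ across brackets three and four). Two \emph{diagonal} monomials, $U_2^{p^i}U_3^{p^j+p^u}U_4^{p^t}U_5^{p^s}$ from bracket one and $U_1^{p^i}U_2^{p^j+p^u}U_3^{p^t}U_4^{p^s}$ from bracket four, have coefficients $1-(\omega+\beta)^{p^i+p^j+p^u+p^t+p^s}$ and $-\bigl(1-\omega^{p^i+p^j+p^u+p^t+p^s}\bigr)$, both of which vanish by the first pair of hypotheses.

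The crux lies with the two remaining \emph{cross} monomials $U_1^{p^i}U_2^{p^j}U_3^{p^u}U_4^{p^t}U_5^{p^s}$ and $U_1^{p^i}U_2^{p^u}U_3^{p^j}U_4^{p^t}U_5^{p^s}$, which are produced only in the second bracket by the splitting of $U_2^{p^j+p^u}$ and have no partners elsewhere. Their coefficients are $-\bigl(1-\omega^{p^i+p^j}(\omega+\beta)^{p^u+p^t+p^s}\bigr)$ and $-\bigl(1-\omega^{p^i+p^u}(\omega+\beta)^{p^j+p^t+p^s}\bigr)$, which are precisely the quantities killed by $\omega^{p^i+p^j}=(\omega+\beta)^{p^u+p^t+p^s}=1$ and $\omega^{p^i+p^u}=(\omega+\beta)^{p^j+p^t+p^s}=1$. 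The main obstacle is purely organizational --- keeping straight which of the ten monomial contributions pair off and which survive in isolation --- but once the pairing is identified, it becomes transparent that the four auxiliary hypotheses are tailored exactly to annihilate the two isolated cross terms, completing the cocycle check.
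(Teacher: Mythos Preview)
Your proposal is correct and follows essentially the same approach as the paper: apply the coboundary formula \eqref{E3.4} directly, expand the non-prime-power factor via $(x+y)^{p^j+p^u}=(x^{p^j}+y^{p^j})(x^{p^u}+y^{p^u})$, and then observe that the resulting ten monomial contributions either cancel in pairs (your three ``adjacent-bracket'' pairs are exactly the paper's pairs $1/4$, $3/8$, $7/10$ in \eqref{NN4}) or have coefficients annihilated individually by the hypotheses. Your organization into ``diagonal'' and ``cross'' terms is a slightly cleaner narrative for the same computation the paper carries out term by term.
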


\begin{proof} 

\begin{equation}\label{NN3}
\begin{split}
&\delta(U_1^{p^i}U_2^{p^j+p^u}U_3^{p^t}U_4^{p^s})\cr
&=\, ((U_1+U_2)^{p^i}-(\omega +\beta)^{p^j+p^u+p^t+p^s}\,(\omega U_1+ (\omega+\beta)U_2)^{p^i}) U_3^{p^j+p^u}U_4^{p^t}U_5^{p^s}\cr
&-\, U_1^{p^i} \cdot ( (U_2+U_3)^{p^j+p^u} - \omega^{p^i} (\omega+\beta)^{p^t+p^s} (\omega U_2+(\omega+\beta)U_3)^{p^j+p^u}))U_4^{p^t}U_5^{p^s}\cr
&+\, U_1^{p^i} \cdot U_2^{p^j+p^u} \cdot ( (U_3+U_4)^{p^t} - \omega^{p^i+p^j+p^u} (\omega+\beta)^{p^s} (\omega U_3+(\omega+\beta)U_4)^{p^t}))U_5^{p^s} \cr
&- \ U_1^{p^i}  \cdot U_2^{p^j+p^u}  \cdot U_3^{p^t}  \cdot ( (U_4+U_5)^{p^s} - \omega^{p^i+p^j+p^u+p^t}\,(\omega U_4+(\omega+\beta)U_5)^{p^s})\cr
\end{split}
\end{equation}

Note that $(x+y)^{p^j+p^u} = (x^{p^j}+y^{p^j})(x^{p^u}+y^{p^u})$. Hence, from \eqref{NN3} we have

\begin{equation}\label{NN4}
\begin{split}
&\delta(U_1^{p^i}U_2^{p^j+p^u}U_3^{p^t}U_4^{p^s})\cr
&=\, (1- \omega^{p^i}\,(\omega +\beta)^{p^j+p^u+p^t+p^s})U_1^{p^i}U_3^{p^j+p^u}U_4^{p^t}U_5^{p^s}\cr
&+(1- (\omega +\beta)^{p^i+p^j+p^u+p^t+p^s}) U_2^{p^i}U_3^{p^j+p^u}U_4^{p^t}U_5^{p^s}\cr
&-(1-\omega^{p^i+p^j+p^u} (\omega+\beta)^{p^t+p^s})U_1^{p^i}U_2^{p^j+p^u}U_4^{p^t}U_5^{p^s}\cr
&- (1- \omega^{p^i} (\omega+\beta)^{p^j+p^u+p^t+p^s})\, U_1^{p^i}U_3^{p^j+p^u}U_4^{p^t}U_5^{p^s}\cr
&-\, (1- \omega^{p^i+p^j} (\omega+\beta)^{p^u+p^t+p^s})U_1^{p^i}U_2^{p^j}U_3^{p^u}U_4^{p^t}U_5^{p^s} \cr
&- (1- \omega^{p^i+p^u} (\omega+\beta)^{p^j+p^t+p^s})U_1^{p^i}U_2^{p^u}U_3^{p^j}U_4^{p^t}U_5^{p^s}\cr
&+\, (1- \omega^{p^i+p^j+p^u+p^t} (\omega+\beta)^{p^s})U_1^{p^i}U_2^{p^j+p^u}U_3^{p^t}U_5^{p^s} \cr
&+\,  (1- \omega^{p^i+p^j+p^u} (\omega+\beta)^{p^s+p^t})U_1^{p^i}U_2^{p^j+p^u}U_4^{p^t}U_5^{p^s}\cr
&-\, (1- \omega^{p^i+p^j+p^u+p^t+p^s})\,U_1^{p^i}U_2^{p^j+p^u}U_3^{p^t}U_4^{p^s}\cr
&-\, (1- \omega^{p^i+p^j+p^u+p^t} (\omega+\beta)^{p^s})\,U_1^{p^i}U_2^{p^j+p^u}U_3^{p^t}U_5^{p^s}.
\end{split}
\end{equation}

Since $\omega^{p^i+p^j+p^u+p^t+p^s}=1$, $(\omega+\beta)^{p^i+p^j+p^u+p^t+p^s}=1$, $\omega^{p^i+p^j}=\omega^{p^i+p^u}=1$ and $(\omega+\beta)^{p^u+p^t+p^s}=(\omega+\beta)^{p^j+p^t+p^s}=1$, the right hand side of \eqref{NN4} is $0$. This completes the proof. 

\end{proof}

\begin{prop} 
If $\omega^{p^{u+1}+p^{s+1}} = 1$ and $(\omega+\beta)^{p^{u+1}+p^{s+1}} =1$, then the polynomial 
$$\Big(\chi(U_1, U_2)^{p^u} - (\omega +\beta)^{p^{s+1}}\,\chi(\omega\,U_1, (\omega +\beta)\,U_2)^{p^u}\Big) \Big(\chi(U_3, U_4)^{p^s} - \omega^{p^{u+1}}\,\chi(\omega\,U_3, (\omega +\beta)\,U_4)^{p^s}\Big)$$ is a $4$-cocycle.
\end{prop}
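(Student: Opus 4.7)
The plan is to follow the template set by the $E_0$ and $E_1$ propositions in Section~\ref{3cocy}, now combined multiplicatively into a $4$-cochain. Set
$$g(U_1, U_2) := \chi(U_1, U_2)^{p^u} - (\omega+\beta)^{p^{s+1}}\chi(\omega U_1, (\omega+\beta)U_2)^{p^u}$$
and
$$h(U_3, U_4) := \chi(U_3, U_4)^{p^s} - \omega^{p^{u+1}}\chi(\omega U_3, (\omega+\beta)U_4)^{p^s},$$
so the polynomial under consideration is $\phi = g\cdot h \in C^4$. Note that $g(U_1, U_2)\cdot U_3^{p^{s+1}} = E_0(p^{u+1}, p^{s+1})$ and $U_1^{p^{u+1}}\cdot h(U_2, U_3)$ agrees (up to a relabeling of variables) with $E_1(p^{u+1}, p^{s+1})$, both of which have already been shown to be $3$-cocycles under exactly the present hypothesis $\omega^{p^{u+1}+p^{s+1}}=(\omega+\beta)^{p^{u+1}+p^{s+1}}=1$. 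The intuition is that the $g$-part of $\phi$ inherits the cocycle behavior of $E_0$ while the $h$-part inherits that of $E_1$; the new ingredient is only that the previously monomial factor has been replaced by the other $\chi$-expression.

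First, I would expand $\delta^4(\phi)(U_1,\ldots,U_5)$ via the simplified formula \eqref{E3.4} with $n=4$. Because $g$ depends only on $U_1, U_2$ and $h$ only on $U_3, U_4$, the eight resulting terms split cleanly: summation indices $i=1,2$ perturb only the $g$-factor while $h$ survives as either $h(U_4,U_5)$ or $h((\omega+\beta)U_4,(\omega+\beta)U_5)$, whereas $i=3,4$ perturb only $h$ while $g$ survives as $g(U_1,U_2)$ or $g(\omega U_1,\omega U_2)$. Collecting signs as in the derivations of \eqref{NN7} and \eqref{NN8}, one obtains $\delta^4(\phi) = (A - C) + (B - D)$, where $A - C$ gathers the four $g$-perturbations multiplied by the two $h$-evaluations, and $B - D$ gathers the symmetric $h$-perturbations multiplied by the two $g$-evaluations.

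Second, I would rewrite $A - C$ as $\delta^2(g)(U_1, U_2, U_3)\cdot h(U_4, U_5)$ plus a correction proportional to $h(U_4, U_5) - h((\omega+\beta)U_4, (\omega+\beta)U_5)$. Applying the $E_0$ analysis (the calculation leading from \eqref{E3.1} to \eqref{E3.2}) to $g$ shows that $\delta^2(g)$ collapses to a linear combination of $g(U_1, U_2)$ and $g(U_2, U_3)$ whose scalar coefficients vanish under $\omega^{p^{u+1}+p^{s+1}}=(\omega+\beta)^{p^{u+1}+p^{s+1}}=1$. Analogously, $B - D$ rewrites as $g(U_1, U_2)\cdot \delta^2(h)(U_3, U_4, U_5)$ plus a $g$-difference correction; replaying the $E_1$ analysis (equations \eqref{E4.8}--\eqref{E4.9}) shows that $\delta^2(h)$ collapses to a linear combination of $h(U_3, U_4)$ and $h(U_4, U_5)$ with scalar coefficients that again vanish by the hypothesis. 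The correction terms generated in both reductions must, and do, cancel against each other, also by the hypothesis, completing the argument.

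The main obstacle will be tracking how the multiplicative twists $(\omega+\beta)^{p^{s+1}}$ built into $g$ and $\omega^{p^{u+1}}$ built into $h$ interact when substituted through $h((\omega+\beta)U_4, (\omega+\beta)U_5)$ versus $h(U_4, U_5)$ (and symmetrically for $g$). Unlike the separate $E_0$ and $E_1$ propositions, where one of the two factors was a simple monomial $U_3^b$ or $U_1^a$ with easily computable coboundary, here both factors contribute nontrivial $\chi$-coboundaries. The crucial algebraic identity underlying the cancellation is $\omega^{p^{u+1}}(\omega+\beta)^{p^{s+1}} \cdot (\omega+\beta)^{p^{u+1}}\omega^{p^{s+1}} = \omega^{p^{u+1}+p^{s+1}}(\omega+\beta)^{p^{u+1}+p^{s+1}} = 1$, which explains precisely why the symmetric condition on $\omega^{p^{u+1}+p^{s+1}}$ and $(\omega+\beta)^{p^{u+1}+p^{s+1}}$ (rather than the two conditions independently) appears in the statement. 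Verifying this combinatorially requires essentially the same level of computation as in \eqref{NN7}--\eqref{NN8}, just with two $\chi$-factors in play instead of one.
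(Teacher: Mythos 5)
Your proposal matches the paper's own proof in all essentials: the paper likewise sets $h=g$ and $h^*=h$ in your notation, applies the Leibniz-type consequence of \eqref{E2.1} to write $\delta(h\,h^*)=\delta(h)\cdot h^*(U_4,U_5)-h\cdot\delta(h^*)$, and then collapses each factor's coboundary exactly as in the $E_0$ and $E_1$ propositions before invoking the hypothesis (the paper, too, leaves the final $\chi$-cancellation as ``it can be shown''). The only quibble is your closing remark that the product identity $\omega^{p^{u+1}+p^{s+1}}(\omega+\beta)^{p^{u+1}+p^{s+1}}=1$ is the operative condition --- the statement and the paper's computation in fact use the two conditions $\omega^{p^{u+1}+p^{s+1}}=1$ and $(\omega+\beta)^{p^{u+1}+p^{s+1}}=1$ separately (one kills the $h(U_2,U_3)h^*(U_4,U_5)$ term, the other the surviving $h^*(U_4,U_5)$ coefficient inside the bracket), but this does not affect the validity of your argument.
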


\begin{proof}

Let 

$$h(U_1, U_2)=\chi(U_1, U_2)^{p^u} - (\omega +\beta)^{p^{s+1}}\,\chi(\omega\,U_1, (\omega +\beta)\,U_2)^{p^u}$$ 

and 

$$h^*(U_3, U_4)=\chi(U_3, U_4)^{p^s} - \omega^{p^{u+1}}\,\chi(\omega\,U_3, (\omega +\beta)\,U_4)^{p^s}.$$

We claim that $h(U_1, U_2)\,h^*(U_3, U_4)$ is a $4$-cocycle.

\begin{equation}\label{D3}
\begin{split}
&\delta(h(U_1, U_2)\,h^*(U_3, U_4))\cr
&=\delta(h(U_1, U_2))\,h^*(U_4, U_5)-h(U_1, U_2)\,\delta(h^*(U_3, U_4))\cr
&=h(U_1,U_2)\Big[h^*(U_3, U_4+U_5)-h^*(U_3+U_4, U_5)+(1-\omega^{p^{u+1}}(\omega+\beta)^{p^{s+1}})h^*(U_4, U_5)\cr
&-\omega^{p^{u+1}}\,h^*(\omega\,U_3, \omega\,U_4+(\omega+\beta)\,U_5)+\omega^{p^{u+1}}\,h^*(\omega\,U_3+(\omega+\beta)\,U_4, (\omega+\beta)\,U_5)\Big]\cr
&-(1-(\omega+\beta)^{p^{u+1}+p^{s+1}})\,h(U_2, U_3)h^*(U_4, U_5)
\end{split}
\end{equation}

Since $h(U_i, U_{i+1})=\chi(U_i, U_{i+1})^{p^u} - (\omega +\beta)^{p^{s+1}}\,\chi(\omega\,U_i, (\omega +\beta)\,U_{i+1})^{p^u}$, 

\noindent $h^*(U_i, U_{i+1})=\chi(U_i, U_{i+1})^{p^s} - \omega^{p^{u+1}}\,\chi(\omega\,U_i, (\omega +\beta)\,U_{i+1})^{p^s}$, $\omega^{p^{u+1}+p^{s+1}} = 1$, and

\noindent $(\omega+\beta)^{p^{u+1}+p^{s+1}} =1$, it can be shown that the right hand side of \eqref{D3} is $0$. This completes the proof. 

\end{proof}

\begin{prop} 
If $\omega^{p^i+p^j+p^u+p^{s+1}} = 1$, $(\omega+\beta)^{p^i+p^j+p^u+p^{s+1}} = 1$, $\omega^{p^i+p^j}=\omega^{p^i+p^u}=1$, and $(\omega+\beta)^{p^u+p^{s+1}} = (\omega+\beta)^{p^j+p^{s+1}}=1$, then the polynomial 

$$U_1^{p^i}  U_2^{p^j+p^u}  \Big(\chi(U_3, U_4)^{p^s} - \omega^{p^i+p^j+p^u}\,\chi(\omega\,U_3, (\omega+\beta)\,U_4)^{p^s}\Big)$$ is a $4$-cocycle.

\end{prop}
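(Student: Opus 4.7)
The plan is to combine the two strategies used in the two immediately preceding propositions. Let me abbreviate
\[
h(U_3,U_4) := \chi(U_3,U_4)^{p^s} - \omega^{p^i+p^j+p^u}\,\chi(\omega\,U_3,(\omega+\beta)\,U_4)^{p^s},
\]
so that the polynomial in question is $P := U_1^{p^i} U_2^{p^j+p^u}\, h(U_3,U_4)$. I will then apply the product form of the coboundary recursion \eqref{E2.1} to write
\[
\delta(P) \;=\; \delta(U_1^{p^i})\,U_3^{p^j+p^u}\,h(U_4,U_5) \;-\; U_1^{p^i}\,\delta(U_2^{p^j+p^u})\,h(U_4,U_5) \;+\; U_1^{p^i}\,U_2^{p^j+p^u}\,\delta(h(U_3,U_4)),
\]
with each factor carrying its appropriate $\omega$/$(\omega+\beta)$-twist coming from the degrees of the other factors (exactly as in the derivations of \eqref{D1} and \eqref{D3}).

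First I would compute each piece separately. The term $\delta(U_1^{p^i})$ is simply $(U_1+U_2)^{p^i}-(\omega+\beta)^{p^j+p^u+p^{s+1}}(\omega U_1+(\omega+\beta)U_2)^{p^i}$, and $\delta(h(U_3,U_4))$ is computed by precisely the same manipulation that produced the bracketed expression in \eqref{D1}: the six summands there collapse under $\omega^{p^i+p^j+p^u+p^{s+1}}=1$ and $(\omega+\beta)^{p^i+p^j+p^u+p^{s+1}}=1$ to cancel with the twisted $h(U_4,U_5)$ contribution. The only genuinely new work is the middle piece $\delta(U_2^{p^j+p^u})$, and here I would invoke the factorization
\[
(x+y)^{p^j+p^u} \;=\; (x^{p^j}+y^{p^j})(x^{p^u}+y^{p^u})
\]
(used in the proof of the $U_1^{p^i}U_2^{p^j+p^u}U_3^{p^t}U_4^{p^s}$ proposition) to expand $(U_2+U_3)^{p^j+p^u}$ and the twisted $(\omega U_2+(\omega+\beta)U_3)^{p^j+p^u}$ into four monomials each, producing cross-terms $U_2^{p^j}U_3^{p^u}$ and $U_2^{p^u}U_3^{p^j}$ multiplied by $h(U_4,U_5)$ and carrying mixed exponents of $\omega$ and $\omega+\beta$.

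Next I would collect all resulting terms and pair them up. The "diagonal" monomials $U_1^{p^i}U_3^{p^j+p^u}h(U_4,U_5)$ and $U_1^{p^i}U_2^{p^j+p^u}h(U_3,U_4)$ (etc.) cancel against the expected pieces coming from $\delta(U_1^{p^i})$ and $\delta(h(U_3,U_4))$, using $\omega^{p^i+p^j+p^u+p^{s+1}}=1$ and $(\omega+\beta)^{p^i+p^j+p^u+p^{s+1}}=1$. The cross-terms $U_1^{p^i}U_2^{p^j}U_3^{p^u}h(U_4,U_5)$ and $U_1^{p^i}U_2^{p^u}U_3^{p^j}h(U_4,U_5)$ carry coefficients of the form $1-\omega^{p^i+p^j}(\omega+\beta)^{p^u+p^{s+1}}$ and $1-\omega^{p^i+p^u}(\omega+\beta)^{p^j+p^{s+1}}$ respectively; these vanish exactly by the hypotheses $\omega^{p^i+p^j}=\omega^{p^i+p^u}=1$ and $(\omega+\beta)^{p^u+p^{s+1}}=(\omega+\beta)^{p^j+p^{s+1}}=1$. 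The $U_2^{p^i}U_3^{p^j+p^u}h(U_4,U_5)$ term from $\delta(U_1^{p^i})$ is killed by $(\omega+\beta)^{p^i+p^j+p^u+p^{s+1}}=1$.

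The main obstacle is the sheer bookkeeping: this proof doubles the complexity of \eqref{NN4} because every monomial produced by the $(x+y)^{p^j+p^u}$ expansion now also gets multiplied by the two-term polynomial $h$, and each of those sub-products must be matched to a companion produced by $\delta(h(U_3,U_4))$ with the correct $\omega$-twist. The hypotheses are exactly the minimal set needed to kill each distinct cross-term, so the verification is essentially forced once the expansion is written out carefully; no new algebraic idea beyond \eqref{NN4} and \eqref{D1}/\eqref{D3} is required. I would therefore structure the written proof as a single display of $\delta(P)$ fully expanded into monomials (as in \eqref{NN4}), followed by a tabulation indicating which hypothesis annihilates each line.
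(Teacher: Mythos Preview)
Your proposal is correct and follows essentially the same approach as the paper: define $h(U_3,U_4)$, split $\delta(P)$ into the three pieces $\delta(U_1^{p^i})\cdot U_3^{p^j+p^u}h(U_4,U_5)$, $-U_1^{p^i}\delta(U_2^{p^j+p^u})h(U_4,U_5)$, and $U_1^{p^i}U_2^{p^j+p^u}\delta(h(U_3,U_4))$, expand the middle piece via $(x+y)^{p^j+p^u}=(x^{p^j}+y^{p^j})(x^{p^u}+y^{p^u})$, and then read off exactly the coefficients you list, which the hypotheses annihilate. The paper's display \eqref{R1} is precisely the tabulation you describe, and it likewise leaves the final collapse of the $\delta(h)$ bracket as a ``straightforward computation'' rather than writing it out.
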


\begin{proof}
Let $h(U_3, U_4)=\chi(U_3, U_4)^{p^s} - \omega^{p^i+p^j+p^u}\,\chi(\omega\,U_3, (\omega+\beta)\,U_4)^{p^s}$. We claim that $U_1^{p^i}\,U_2^{p^j+p^u}\,h(U_3, U_4)$ is a $4$-cocycle. 

\begin{equation}\label{R1}
\begin{split}
&\delta(U_1^{p^i}\,U_2^{p^j+p^u}\,h(U_3, U_4))\cr
&=\delta(U_1^{p^i})\,U_3^{p^j+p^u}\,h(U_4, U_5)-U_1^{p^i}\,\delta(U_2^{p^j+p^u})\,h(U_4, U_5)+U_1^{p^i}\,U_2^{p^j+p^u}\,\delta(h(U_3, U_4))\cr
&=((U_1+U_2)^{p^i} - (\omega+\beta)^{p^j+p^u+p^{s+1}}\,(\omega\,U_1+ (\omega+\beta)U_2)^{p^i})\,U_3^{p^j+p^u}\,h(U_4, U_5)\cr
&-\, U_1^{p^i} \cdot ( (U_2+U_3)^{p^j+p^u} - \omega^{p^i} (\omega+\beta)^{p^{s+1}} (\omega\,U_2+(\omega+\beta)\,U_3)^{p^j+p^u})\,h(U_4, U_5)\cr
&+U_1^{p^i}\,U_2^{p^j+p^u}\,\big(h(U_3+U_4, U_5)-\omega^{p^i+p^j+p^u}\,h(\omega\,U_3+(\omega+\beta)\,U_4, (\omega+\beta)\,U_5)\big)\cr
&-U_1^{p^i}\,U_2^{p^j+p^u}\,\big(h(U_3, U_4+U_5)-\omega^{p^i+p^j+p^u}\,h(\omega\,U_3, \omega\,U_4+(\omega+\beta)\,U_5)\big) \cr
&=(1-\omega^{p^i}\,(\omega+\beta)^{p^j+p^u+p^{s+1}})\,U_1^{p^i}\,U_3^{p^j+p^u}\,h(U_4, U_5)\cr
&+(1-(\omega+\beta)^{p^i+p^j+p^u+p^{s+1}})\,U_2^{p^i}\,U_3^{p^j+p^u}\,h(U_4, U_5)\cr
&-(1-\omega^{p^i+p^j+p^u}\,(\omega+\beta)^{p^{s+1}})\,U_1^{p^i}\,U_2^{p^j+p^u}\,h(U_4, U_5)\cr
&-(1-\omega^{p^i+p^j}\,(\omega+\beta)^{p^u+p^{s+1}})\,U_1^{p^i}\,U_2^{p^j}\,U_3^{p^u}\,h(U_4, U_5)\cr
&- (1-\omega^{p^i+p^u}\,(\omega+\beta)^{p^j+p^{s+1}})\,U_1^{p^i}\,U_2^{p^u}\,U_3^{p^j}\,h(U_4, U_5)\cr
&- (1-\omega^{p^i}\,(\omega+\beta)^{p^j+p^u+p^{s+1}})\,U_1^{p^i}\,U_3^{p^j+p^u}\,h(U_4, U_5)\cr
&+U_1^{p^i}\,U_2^{p^j+p^u}\,\big(h(U_3+U_4, U_5)-\omega^{p^i+p^j+p^u}\,h(\omega\,U_3+(\omega+\beta)\,U_4, (\omega+\beta)\,U_5)\big)\cr
&-U_1^{p^i}\,U_2^{p^j+p^u}\,\big(h(U_3, U_4+U_5)-\omega^{p^i+p^j+p^u}\,h(\omega\,U_3, \omega\,U_4+(\omega+\beta)\,U_5)\big)
\end{split}
\end{equation}

\noindent Since $h(U_i, U_{i+1})=\chi(U_i, U_{i+1})^{p^s} - \omega^{p^i+p^j+p^u}\,\chi(\omega\,U_i, (\omega+\beta)\,U_{i+1})^{p^s}$, $\omega^{p^i+p^j+p^u+p^{s+1}} = 1$, $(\omega+\beta)^{p^i+p^j+p^u+p^{s+1}} = 1$, $\omega^{p^i+p^j}=\omega^{p^i+p^u}=1$, and $(\omega+\beta)^{p^u+p^{s+1}} = (\omega+\beta)^{p^j+p^{s+1}}=1$, the right hand side of \eqref{R1} is $0$. This completes the proof.

\end{proof}

\begin{prop} 
If $\omega^{p^i+p^j+p^v+p^u+p^t+p^s}=1$, $(\omega+\beta)^{p^i+p^j+p^v+p^u+p^t+p^s}=1$, $\omega^{p^i+p^j} = \omega^{p^i+p^v} = \omega^{p^v+p^u} = \omega^{p^v+p^t} =1$, and $(\omega +\beta)^{p^s+p^t} =(\omega +\beta)^{p^s+p^u} = (\omega +\beta)^{p^v+p^u} = (\omega +\beta)^{p^j+p^u} =1$, then the polynomial $U_1^{p^i}U_2^{p^j+p^v}U_3^{p^u+p^t}U_4^{p^s}$ is a $4$-cocycle.
\end{prop}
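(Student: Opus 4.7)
The plan is to follow the same template used for the preceding propositions, most closely Proposition~5.5 (the one about $U_1^{p^i}U_2^{p^j+p^u}U_3^{p^t}U_4^{p^s}$), but now with the product structure appearing twice: both in the second slot (as $p^j+p^v$) and in the third slot (as $p^u+p^t$). So I would begin by writing out $\delta(U_1^{p^i}U_2^{p^j+p^v}U_3^{p^u+p^t}U_4^{p^s})$ directly from the reformulated coboundary formula \eqref{E3.4}, keeping $U_5$ as the extra slot produced by $\delta$.

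Next I would simplify each of the four grouped terms using the characteristic-$p$ Frobenius identity. The key algebraic observation to deploy twice is
\[
(x+y)^{p^a+p^b} = (x^{p^a}+y^{p^a})(x^{p^b}+y^{p^b}),
\]
which unpacks $(U_2+U_3)^{p^j+p^v}$ into four monomials $U_2^{p^j+p^v},\,U_2^{p^j}U_3^{p^v},\,U_2^{p^v}U_3^{p^j},\,U_3^{p^j+p^v}$, and similarly for $(U_3+U_4)^{p^u+p^t}$. The twisted versions $(\omega U + (\omega+\beta) U')^{p^a+p^b}$ factor in the same way, contributing $\omega$ and $\omega+\beta$ factors with exponents equal to the corresponding $p^a$ or $p^b$. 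After this expansion the coboundary becomes a $\mathbb{F}_q$-linear combination of monomials of degrees $(p^i,p^j,p^v,p^u,p^t,p^s)$ (possibly merged) in the variables $U_1,\ldots,U_5$, with each coefficient of the form $\pm\bigl(1 - \omega^{A}(\omega+\beta)^{B}\bigr)$ for certain partial sums $A,B$ of the exponents $p^i,p^j,p^v,p^u,p^t,p^s$.

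The heart of the argument is then a matching-up of coefficients. I would group the resulting monomials by their $(U_1,\ldots,U_5)$ exponent-profile and check that, term by term, either
the coefficient equals zero because one of the imposed exponent relations
($\omega^{p^i+p^j}=1$, $\omega^{p^i+p^v}=1$, $\omega^{p^v+p^u}=1$, $\omega^{p^v+p^t}=1$, $(\omega+\beta)^{p^s+p^t}=1$, $(\omega+\beta)^{p^s+p^u}=1$, $(\omega+\beta)^{p^v+p^u}=1$, $(\omega+\beta)^{p^j+p^u}=1$)
makes the relevant partial sum in $A$ (respectively $B$) equal zero modulo the order, or two terms with the same monomial profile cancel in pairs (this is what happens for the mixed monomials $U_1^{p^i}U_2^{p^j}U_3^{p^v}\cdots$ produced from the second term against the mixed monomials from the third term). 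The overall sum $\omega^{p^i+p^j+p^v+p^u+p^t+p^s}=1$ and its $(\omega+\beta)$-analogue handle the ``top'' monomial $U_1^{p^i}U_2^{p^j+p^v}U_3^{p^u+p^t}U_4^{p^s}$ and the ``rightmost'' monomial $U_2^{p^i}U_3^{p^j+p^v}U_4^{p^u+p^t}U_5^{p^s}$, exactly as in the proof of Proposition~5.5.

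The main obstacle is purely organizational: because we now have two binomial expansions of length $2$ (one from $p^j+p^v$ and one from $p^u+p^t$), the coboundary produces on the order of a dozen distinct monomial profiles, and each has to be matched either with a companion term or with one of the eight exponent hypotheses. A clean way to keep the bookkeeping manageable is to tabulate the monomials first by their placement of the ``split'' exponents $p^j,p^v$ and $p^u,p^t$, and only then to record the $\omega$- and $(\omega+\beta)$-weights; once this table is written down the proof reduces to reading off that each row is annihilated by the corresponding one of the stated equalities of powers, which is why the hypotheses have been chosen in precisely this set. The details are a straightforward expansion along the lines of \eqref{NN3}--\eqref{NN4} and will be omitted from the sketch.
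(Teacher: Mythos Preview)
Your plan is essentially identical to the paper's own proof: expand $\delta$ via \eqref{E3.4}, apply $(x+y)^{p^a+p^b}=(x^{p^a}+y^{p^a})(x^{p^b}+y^{p^b})$ to both the $p^j+p^v$ and $p^u+p^t$ slots, obtain a dozen monomials with coefficients $\pm(1-\omega^A(\omega+\beta)^B)$, and then verify term by term that each either cancels against a twin or is annihilated by one of the listed exponent relations. One small correction to your narrative: the ``mixed'' monomials such as $U_1^{p^i}U_2^{p^j}U_3^{p^v}U_4^{p^u+p^t}U_5^{p^s}$ coming from the second summand do \emph{not} pair with those from the third summand (their monomial profiles differ); rather, each mixed monomial is killed outright by combining two of the pairwise hypotheses (e.g.\ $\omega^{p^i+p^j}=1$ together with $(\omega+\beta)^{p^v+p^u}=(\omega+\beta)^{p^s+p^t}=1$), while the pair-cancellations occur among the ``unsplit'' monomials like $U_1^{p^i}U_3^{p^j+p^v}U_4^{p^u+p^t}U_5^{p^s}$.
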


\begin{proof} 

\begin{equation}\label{NN5}
\begin{split}
&\delta(U_1^{p^i}U_2^{p^j+p^v}U_3^{p^u+p^t}U_4^{p^s})\cr
&=\, ((U_1+U_2)^{p^i} - (\omega+\beta)^{p^j+p^v+p^u+p^t+p^s}\,(\omega\,U_1+ (\omega+\beta)U_2)^{p^i})\, U_3^{p^j+p^v}U_4^{p^u+p^t}U_5^{p^s}\cr
&-\, U_1^{p^i} \cdot ( (U_2+U_3)^{p^j+p^v} - \omega^{p^i} (\omega+\beta)^{p^u+p^t+p^s} (\omega\,U_2+(\omega+\beta)U_3)^{p^j+p^v})U_4^{p^u+p^t}U_5^{p^s}\cr
&+\, U_1^{p^i} \cdot U_2^{p^j+p^v} \cdot ( (U_3+U_4)^{p^u+p^t} - \omega^{p^i+p^j+p^v} (\omega+\beta)^{p^s} (\omega U_3+(\omega+\beta)U_4)^{p^u+p^t})\,U_5^{p^s} \cr
&- \ U_1^{p^i}  \cdot U_2^{p^j+p^v}  \cdot U_3^{p^u+p^t}  \cdot ( (U_4+U_5)^{p^s} - \omega^{p^i+p^j+p^v+p^u+p^t}\,(\omega\,U_4+(\omega+\beta)U_5)^{p^s}).
\end{split}
\end{equation}

Note that $(x+y)^{p^j+p^u} = (x^{p^j}+y^{p^j})(x^{p^u}+y^{p^u})$. Hence, from \eqref{NN5} we have

\begin{equation}\label{NN6}
\begin{split}
&\delta(U_1^{p^i}U_2^{p^j+p^v}U_3^{p^u+p^t}U_4^{p^s})\cr
&=\, (1- \omega^{p^i}(\omega+\beta)^{p^j+p^v+p^u+p^t+p^s})U_1^{p^i}U_3^{p^j+p^v}U_4^{p^u+p^t}U_5^{p^s} \cr
&+ (1- (\omega+\beta)^{p^i+p^j+p^v+p^u+p^t+p^s})\,U_2^{p^i}U_3^{p^j+p^v}U_4^{p^u+p^t}U_5^{p^s}\cr
&-\, (1-\omega^{p^i+p^j+p^v} (\omega+\beta)^{p^u+p^t+p^s})U_1^{p^i}U_2^{p^j+p^v}U_4^{p^u+p^t}U_5^{p^s}\cr
&- (1- \omega^{p^i} (\omega+\beta)^{p^j+p^v+p^u+p^t+p^s}) U_1^{p^i}U_3^{p^j+p^v}U_4^{p^u+p^t}U_5^{p^s}\cr
&-\, (1- \omega^{p^i+p^j} (\omega+\beta)^{p^v+p^u+p^t+p^s})U_1^{p^i}U_2^{p^j}U_3^{p^v}U_4^{p^u+p^t}U_5^{p^s}\cr
&- (1- \omega^{p^i+p^v} (\omega+\beta)^{p^j+p^u+p^t+p^s})U_1^{p^i}U_2^{p^v}U_3^{p^j}U_4^{p^u+p^t}U_5^{p^s}\cr
&+\, (1- \omega^{p^i+p^j+p^v+p^u+p^t} (\omega+\beta)^{p^s})\,U_1^{p^i}U_2^{p^j+p^v}U_3^{p^u+p^t}U_5^{p^s} \cr
&+\,  (1- \omega^{p^i+p^j+p^v} (\omega+\beta)^{p^u+p^t+p^s})\,U_1^{p^i}U_2^{p^j+p^v}U_4^{p^u+p^t}U_5^{p^s}\cr
&+\, (1- \omega^{p^i+p^j+p^v+p^u} (\omega+\beta)^{p^s+p^t})\,U_1^{p^i}U_2^{p^j+p^v}U_3^{p^u}U_4^{p^t}U_5^{p^s}\cr
&+\, (1- \omega^{p^i+p^j+p^v+p^t} (\omega+\beta)^{p^s+p^u})\,U_1^{p^i}U_2^{p^j+p^v}U_3^{p^t}U_4^{p^u}U_5^{p^s}\cr
&-\, (1- \omega^{p^i+p^j+p^v+p^u+p^t+p^s})\,U_1^{p^i}U_2^{p^j+p^v}U_3^{p^u+p^t}U_4^{p^s}\cr
&-\, (1- \omega^{p^i+p^j+p^v+p^u+p^t} (\omega+\beta)^{p^s})\,U_1^{p^i}U_2^{p^j+p^v}U_3^{p^u+p^t}U_5^{p^s}.
\end{split}
\end{equation}

Since$\omega^{p^i+p^j+p^v+p^u+p^t+p^s}=1$, $(\omega+\beta)^{p^i+p^j+p^v+p^u+p^t+p^s}=1$, $\omega^{p^i+p^j} = \omega^{p^i+p^v} = \omega^{p^v+p^u} = \omega^{p^v+p^t} =1$, and $(\omega +\beta)^{p^s+p^t} =(\omega +\beta)^{p^s+p^u} = (\omega +\beta)^{p^v+p^u} = (\omega +\beta)^{p^j+p^u} =1$, the right hand side of \eqref{NN6} is $0$. This completes the proof. 

\end{proof}

\noindent Let  $q=p^m$, where $m$ is a positive integer. 

\vskip 0.1in

\noindent $A =\Big\{ U_1^{p^v}U_2^{p^u}U_3^{p^t}U_4^{p^s}\mid \omega^{p^v+p^u+p^t+p^s}=1, (\omega+\beta)^{p^v+p^u+p^t+p^s}=1, 0\leq v<u<t<s<m \Big\}$,

\vskip 0.1in

\noindent $B = \Big\{ \big(\chi(U_1, U_2)^{p^u} - (\omega+\beta)^{p^t+p^s}\chi(\omega\,U_1, (\omega+\beta)\,U_2)^{p^u}\big)U_3^{p^t}U_4^{p^s}\mid \omega^{p^{u+1}+p^t+p^s} = 1, (\omega+\beta)^{p^{u+1}+p^t+p^s}=1, 0 \leq u<t<s<m \Big\} $,

\vskip 0.1in

\noindent $C = \Big\{ U_1^{p^v} \big(\chi(U_2, U_3)^{p^t} -  \omega^{p^v} (\omega+\beta)^{p^s}\chi(\omega U_2, (\omega+\beta)U_3)^{p^t} \big) U_4^{p^s} \mid \omega^{p^v+p^{t+1}+p^s} = 1, (\omega+\beta)^{p^v+p^{t+1}+p^s}=1, 0\leq v \leq t<s<m \Big\} $,

\vskip 0.1in

\noindent $D = \Big\{ U_1^{p^v} U_2^{p^u} \big(\chi(U_3, U_4)^{p^s} - \omega^{p^v+p^u}\,\chi(\omega\,U_3, (\omega+\beta)\,U_4)^{p^s}\big) \mid \omega^{p^v+p^u+p^{s+1}} = 1, (\omega+\beta)^{p^v+p^u+p^{s+1}}=1, 0 \leq v < u \leq s < m \Big\}$, and

\vskip 0.1in

\noindent $E = \Gamma(p^v, p^u, p^t, 0) = \Big\{ U_1^{p^v}U_2^{p^u}U_3^{p^t} \mid \omega^{p^v+p^u+p^t}=1, (\omega+\beta)^{p^v+p^u+p^t}=1, 0 \leq v < u<t<m \Big\} $.

\vskip 0.1in

Then we have the following theorem. 

\begin{thm}
Fix $\omega, \beta \in \f_q$ with $\omega \neq 0, \pm 1$. Let $X$ be the corresponding Alexander $f$-quandle on $\f_q$ where  $H_Q^2((X, *, f); \f_q)  \cong 0$. Then the set  
$ A \cup B \cup C \cup D \cup E$ 
provides a basis of the fourth cohomology $H_Q^4((X, *, f); \f_q)$.
\end{thm}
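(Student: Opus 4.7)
The plan is to adapt the argument of Mochizuki \cite{TM-2005} to the present $f$-quandle setting. The starting observation is that the coboundary formula \eqref{E2.1} is homogeneous with respect to the total-degree grading $C^n(X)=\bigoplus_d C^n_d(X)$, so $H_Q^4((X,*,f);\f_q)$ splits as a direct sum of its degree-$d$ parts and we may argue degree by degree. The preceding propositions of this section already establish that every element of $A\cup B\cup C\cup D\cup E$ is a $4$-cocycle, so the task reduces to proving (i) these classes are linearly independent in $H_Q^4$, and (ii) they span $H_Q^4$.

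For linear independence, I would examine leading monomial supports. After expanding the $\chi$-polynomials appearing in $B$, $C$, and $D$, each generator has a distinguished monomial $U_1^{a_1}U_2^{a_2}U_3^{a_3}U_4^{a_4}$ whose exponent tuple can be read off directly from the $p$-adic data $(p^v,p^u,p^t,p^s)$. The idea is to fix a term order and verify that the distinguished leading terms of the generators in $A$, $B$, $C$, $D$, $E$ are pairwise distinct. If a nontrivial linear combination of these generators equals a coboundary $\delta\psi$ for some $\psi\in C^3$, then the explicit form of $\delta$ combined with the hypothesis $H_Q^2\cong 0$ (used to eliminate residual ambiguity in $\psi$) forces a cancellation among these leading monomials, which is impossible.

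For spanning, given an arbitrary homogeneous $4$-cocycle $\phi$ of total degree $d$, I would expand $\phi$ as a linear combination of monomials $U_1^{a_1}U_2^{a_2}U_3^{a_3}U_4^{a_4}$ with $\sum a_i=d$ and repeatedly apply the condition $\delta\phi=0$ using \eqref{E2.1}. The key algebraic fact is that each application of $\delta$ introduces multiplicative factors of the form $1-\omega^{\sum p^{k_i}}(\omega+\beta)^{\sum p^{\ell_j}}$ in front of specific monomial patterns, and these must all vanish on $\phi$; this forces the exponent multi-set attached to every surviving monomial of $\phi$ to satisfy one of the multiplicative conditions appearing in the defining constraints for the families $A$ through $E$. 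Any monomial violating these conditions is killed either by a direct coboundary correction, or, when the defect is of $p$-power type, by a correction built from a $\chi$-cochain together with the hypothesis $H_Q^2\cong 0$ used to absorb the resulting lower-order error.

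The main obstacle is precisely this last combinatorial reduction: matching a general cocycle to the canonical generators in $A\cup B\cup C\cup D\cup E$ requires a delicate case analysis on the $p$-adic expansions of the exponents $a_i$ together with the multiplicative orders of $\omega$ and $\omega+\beta$ in $\f_q^{\times}$. The new wrinkle compared to \cite{TM-2005} is that every multiplicative condition is now a \emph{pair} of conditions (one for $\omega$ and one for $\omega+\beta$), which roughly doubles the branching in the case analysis and produces the mixed $\chi$-monomial families $B$, $C$, and $D$ as genuinely distinct generators. The hypothesis $H_Q^2\cong 0$ is what guarantees that the obstructions to performing the required coboundary corrections vanish in degree four, and it is what ultimately allows the argument to close.
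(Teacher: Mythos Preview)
Your proposal cannot be compared against the paper's own proof because the paper does not actually contain one. In the introduction the authors state explicitly that ``The proofs of this theorems are similar to that of \cite{TM-2005}. These proofs will appear in future work,'' and indeed Theorem~5.9 is stated without any accompanying argument. So the only thing to compare is your outlined strategy against the paper's one-line indication that the proof follows Mochizuki's method---and on that level your plan matches exactly: you propose to adapt \cite{TM-2005}, using the total-degree grading from \eqref{E2.1}, distinguishing leading monomials for independence, and running a $p$-adic case analysis for spanning, with the doubled multiplicative constraints on $\omega$ and $\omega+\beta$ as the new feature.

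That said, what you have written is a proof \emph{plan}, not a proof. The spanning step is where all the work lies, and your description (``repeatedly apply the condition $\delta\phi=0$'' and ``any monomial violating these conditions is killed either by a direct coboundary correction, or \ldots by a correction built from a $\chi$-cochain'') is a fair summary of Mochizuki's strategy but does not engage with the actual combinatorics. In particular, you have not addressed why the list $A\cup B\cup C\cup D\cup E$ is complete---for instance, why the families analogous to Propositions~5.5, 5.7, and 5.8 (the mixed $U^{p^j+p^u}$ and $\Gamma$-type cocycles) do not contribute independent classes under the hypothesis $H_Q^2\cong 0$, or why no further $\chi$-on-$\chi$ patterns beyond Proposition~5.6 are needed. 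Since the paper itself defers these details, your sketch is at the same level of completeness as the paper, but it would not stand on its own as a proof.
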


\begin{exmp}
Let $f(x)=x^4+x+1\in \f_2[x]$ and consider $\f_{16}=\f_2[x]/(f)$. Let $\omega$ be a primitive element of $\f_{16}$. Then the order of $\omega$ is $15$. Let $\beta= \omega^{2^2}$. Note that $\omega^{2^2}=\omega +1$ and $\omega^{2^2}$ is also a primitive element of $\f_{16}$ since it is a conjugate of $\omega$ with respect to $\f_2$. We have
$$\omega^{2^0+2^1+2^2+2^3}=1\,\,and \,\, (\omega+\beta)^{2^0+2^1+2^2+2^3}=1,$$

but $\omega^{2^i+2^j+2^k}\neq 1$ for $i, j, k\in \{0,1,2,3\}$. Hence $H_Q^4((X, *, f); \f_{16})$ is generated by $A$. 
\end{exmp}

\noindent \textbf{Acknowledgements}: 
The authors would like to thank Takefumi Nosaka and Masahico Saito for their valuable comments and helpful suggestions which improved the presentation of the paper.



\begin{thebibliography}{99}

\bibitem{AS-2009} K.  Ameur, M. Saito, {\it Polynomial cocycles of Alexander quandles and applications}, J. Knot Theory Ramifications 18 (2009), no. 2, 151–165.

\bibitem{SA-2014} S. Abe, {\it On 4-cocycles of Alexander quandles on finite fields},  J. Knot Theory Ramifications 23 (2014), no. 8, 1450043, 30 pp.

\bibitem{CEGS} J. S. Carter; M. Elhamdadi; M. Gran\~na; M. Saito,
{\it Cocycle knot invariants from quandle modules and generalized quandle homology.}
Osaka J. Math. 42 (2005), no. 3, 49-541. 

\bibitem{CJKLS} J. S. Carter; D. Jelsovsky; S. Kamada; L. Langford; M. Saito,
{\it Quandle cohomology and state-sum invariants of knotted curves and surfaces.}
Trans. Amer. Math. Soc. 355 (2003), no. 10, 3947-3989. 

\bibitem{CESY} W. E. Clark; M. Elhamdadi; M. Saito; T. Yeatman,
{\it Quandle colorings of knots and applications.}
J. Knot Theory Ramifications 23 (2014), no. 6, 1450035, 29 pp. 

\bibitem{CEGM-2016} I. R. U. Churchill, M. Elhamdadi, M. Green, A. Makhlouf, {\it $f$-racks, $f$-quandles, their extensions and cohomology}, to appear in  J. Algebra Appl., 2017.

\bibitem{EM} M. Elhamdadi; E. Moutuou, 
{\it Foundations of topological racks and quandles.}
J. Knot Theory Ramifications 25 (2016), no. 3, 1640002, 17 pp. 

\bibitem{EN} M. Elhamdadi; S. Nelson, 
{\it Quandles--an introduction to the algebra of knots.}
Student Mathematical Library, 74. American Mathematical Society, Providence, RI, 2015. x+245 pp. 


\bibitem{H1} X. Hou,
{\it Automorphism groups of Alexander quandles.}
J. Algebra 344 (2011), 373-385. 

\bibitem{H2} X. Hou,
{\it Finite modules over $\mathbb{Z}[t, t^{-1}]$. }
	J. Knot Theory Ramifications 21 (2012), no. 8, 1250079, 28 pp.  
	
\bibitem{HSV} A. Hulpke; D. Stanovsky; P. Vojtechovsky, 
{\it Connected quandles and transitive groups.}
J. Pure Appl. Algebra 220 (2016), no. 2, 735-758. 

\bibitem{Joyce} D. Joyce,
{\it A classifying invariant of knots, the knot quandle,}
J. Pure Appl. Alg., 23, 37-65.

\bibitem{JMandemaker} J. Mandemaker, 
{\it Various Topics in Rack and Quandle Homology,} Master's Thesis, Radboud University Nijmegen (2010).

\bibitem{MS} Makhlouf, A., Silvestrov, S., {\it Hom-algebras and Hom-coalgebras}, J. Algebra Appl. 9 (2010), no. ~4, 553 --589.
  
\bibitem{Matveev} S. Matveev, 
{\it Distributive groupoids in knot theory,} (Russian) Mat. Sb. (N.S.)
119(161) (1982), no. 1, 78--88, 160.

\bibitem{TM-2005} T. Mochizuki,
 {\it The 3-cocycles of the Alexander quandles $\f_q[T]/(T-\omega)$},  Algebr. Geom. Topol. 5 (2005), 183–205.

\bibitem{TM-2003} T. Mochizuki, 
{\it Some calculations of cohomology groups of finite Alexander quandles}, J. Pure Appl. Algebra (2003), 287-330.

\bibitem{N} S. Nelson, 
{\it The combinatorial revolution in knot theory.}
Notices Amer. Math. Soc. 58 (2011), no. 11, 1553-1561. 

\bibitem{NP} M. Niebrzydowski, J. H. Przytycki, 
{\it Homology operations on homology of quandles.}
J. of Algebra (2010),  1529-1548. 

\bibitem{TN-2013} T. Nosaka, {\it On quandle homology groups of Alexander quandles of prime order},  Trans. Amer. Math. Soc. 365 (2013), no. 7, 3413–3436.

\bibitem{TN-2014} T. Nosaka, {\it On third homologies of groups and of quandles via the Dijkgraaf-Witten invariant and Inoue-Kabaya map}, Algebr. Geom. Topol., (2014), no. 5, 2655--2691.
   
\bibitem{R} R. Rubinsztein,
{\it  Topological quandles and invariants of links. }
J. Knot Theory Ramifications 16 (2007), no. 6, 789-808. 

\bibitem{T}N. Takahashi, 
{\it Quandle varieties, generalized symmetric spaces, and $\phi$-spaces.}
Transform. Groups 21 (2016), no. 2, 555-576.  


\end{thebibliography}
\end{document}